 \newtheorem{thm}{Theorem}[section]
\newtheorem{lem}[thm]{Lemma}
\newtheorem{rem}[thm]{Remark}
\numberwithin{equation}{section}
\newcommand{\abs}[1]{\left\vert#1\right\vert}
\newcommand{\set}[1]{\left\{#1\right\}}
\newcommand{\Real}{\mathbb R}
\begin{document}

\title[Generalized multiquadric radial function for integral fractional Laplacian]{Fractional-order dependent  Radial basis functions meshless methods for the integral  fractional Laplacian}


\author{Zhaopeng Hao}
\address{School of Mathematics, Southeast University, Nanjing 210096, PR China}
\email{zphao@seu.edu.cn}

\author{Zhiqiang Cai}
\address{Department of Mathematics, Purdue University, 150 N. University Street, West Lafayette, IN 47907}
\email{caiz@purdue.edu}

\author{Zhongqiang Zhang}
\address{Department of Mathematical Sciences, Worcester Polytechnic Institute, Worcester, Ma 01609 USA}\email{zzhang7@wpi.edu}



\subjclass[2010]{ 35B65, 65M70, 41A25, 26A33}

\keywords{Regularity, Spectral Galerkin methods, Integral fractional Laplacian, Fast Sover}
\date{\today}


\keywords{Radial basis functions, Pseudo-spectral collocation methods, Integral fractional Laplacian, Meshless method}
\date{\today}
\subjclass[2010]{ 35B65, 65M70, 41A25, 26A33}

\begin{abstract}
We study the numerical evaluation of the integral fractional Laplacian and its application in solving fractional diffusion equations.
We derive a pseudo-spectral formula for the integral fractional Laplacian operator based on fractional order-dependent,  generalized multi-quadratic radial basis functions (RBFs) to address efficient computation of the hyper-singular integral.
 We apply the proposed formula to solving fractional diffusion equations and design a simple, easy-to-implement and nearly integration-free meshless method.
 We discuss the convergence of the novel meshless method through equivalent  Galerkin formulations.  
   We carry out numerical experiments to demonstrate the accuracy and efficiency of the proposed approach compared to the existing method using Gaussian RBFs. 
\end{abstract}

\maketitle


\section{Introduction }

Fractional Laplacian is a powerful mathematical tool in modeling the accurate word physical phenomenon such as anomalous \cite{MK00}, electromagnetism \cite{Bonito-Peng-2020}, quasi-geostrophic flows \cite{Bonito-Nazarov-2021}, and spatial statistics \cite{Lindgren-Rue-Lind-2011}.
Extensive numerical methods for 
fractional Laplacian have been developed over the past decade, see e.g. review papers 
\cite{Bonito-Bor-Noche-2018,Lischke17,Marta-Du-Gunzburger-2020,HarLM20}. However,  their computation is still very challenging due to the nonlocality and singularity.  After these comprehensive review papers,  more papers are focused on the fast evaluation of the fractional Laplacian and treatment of singularity, 
such as finite difference methods 
\cite{Hao-Zhang-Du-2021} on a square, spectral methods \cite{Hao-Li-Zhang-Zhang-2021}  on a disk, global sinc basis or Hermite function methods 
\cite{Antil-Sinc-2021,FlaVG21}, and finite volume  methods \cite{XuCLQ20}, Gaussian radial basis function methods \cite{Burkard-Wu-Zhang-2021}.

However,  the evaluation of fractional Laplacian on general domains still requires significant computational efforts.  Except in \cite{Burkard-Wu-Zhang-2021}, all the numerical methods above require numerical integration over hypersingular kernels or differentiation of singular functions, and thus the assembly of the differentiation matrix
is rather expensive.  In  \cite{Burkard-Wu-Zhang-2021},
the authors use the fact that the fractional Laplacian (defined via a hypersingular integral) of the Gaussian radial function has an analytic representation via   confluent hypergeometric functions. Yet the computation burden lies on the computation of this special function, which does not significantly reduce the computational cost, especially when a moderate or large scale of numerical evaluation is needed. 

\subsection*{Contributions}
In this work, we propose to use the generalized multiquadric functions 
$(1+\abs{x}^2)^\beta$ ({\color{black} $\beta=(\alpha-d)/2$, properly chosen. Here $\alpha$ is the order of the fractional Laplacian, and $d$ is the space dimension.}) as the radial basis functions, which leads to explicit and simple evaluations of integral fractional Laplacian ({\color{black}Theorem \ref{RBF-Lemma-pseudo-spectral-relation}}). Specifically, the integral fractional Laplacian of  a generalized multiquadric function  with proper indices is still a
generalized multiquadric function.
As a consequence, numerically evaluating the integral fractional Laplacian of the generalized multiquadric functions (GMQ, see e.g., \cite{Buhmann-2003,Wendland-2005,Fasshauer-2007,Fornberg-Flyer-2015} for the definition of GMQ) is much faster than that of Gaussian radial functions. In Table \ref{table:cpu}, we present the evaluations of 
these two radial functions at $N$ uniformly distributed equispaced  points  on the interval $(-1,1)$.  
%
%
%

\begin{table}[!htb]
	\centering
	\caption{Comparison of integral fractional Laplacian in the CPU time measured in seconds between hypergeometric functions in \cite{Burkard-Wu-Zhang-2021} and the proposed multi-quadratic function concerning the total number equispaced points on $(-1,1)$. 
		All the tests throughout the paper are  carried out  with  MATLAB R2020a and
		on a PC with Processor	Intel(R) Core(TM) i5-1035G7 CPU @ 1.20GHz. }\label{table:cpu}
	\begin{tabular}{c|cccc}  
		\hline \hline
		& $N=1024$ & $N= 2048$ &$N= 4096$ & $N=8192$ \\
		\hline
		GMQ	&   4.3e-04      & 6.0e-4 & 3.33-4 & 5.2e-4\\ 
		hypergeometric	& 1.96 & 4.08 & 9.06 & 19.8 \\
		\hline \hline
	\end{tabular}
\end{table}

Based on the proposed radial basis functions, we develop efficient meshless numerical methods for the fractional Laplacian {\color{black}(with order $\alpha\neq d$)} and fractional diffusion equations. 
As the fractional Laplacian of the proposed radial basis functions is exact and requires little effort to evaluate, our method is stable for a relatively large number of collocation points. 
Also, the method enjoys spectral accuracy if  the solution  is smooth. Specifically, our method benefits from the pseudo-spectral formula derived in \ref{subsection-pseudo-spectral}.
Compared to similar meshless pseudo-spectral method using different RBFs, such as Gaussian functions, our method is simple to implement and easy to combine with available fast solvers, e.g. \cite{Xia-2021}. 


Here we summarize the goal and novelty of this work. 
We explore a new meshless method for solving the PDEs with the integral fractional Laplacian. The novelty of this work is that we present a new  method avoiding singular integration and series truncation to the fractional Laplacian on the general bounded domain.
We also discuss the convergence of the method and demonstrate the method has spectral convergence.

We organize the rest of the paper as follows. 
In Section \ref{sec:frac-lap-gmq}, we derive the pseudo-spectral formula for the integral fractional Laplacian
of a special generalized multi-quadratic function. We also present the collocation method and some details of calculations of the method.
In Section \ref{section-PSF-GF},
we prove the  well-posedness of the considered problem and 
prove the convergence of Galerkin methods equivalent to the collocation method.  We first describe  conforming and nonconforming Galerkin formulations that are equivalent to
the collocation methods. We then discuss the convergence of the Galerkin formulations.
In 
Section \ref{section-collocation-reformulation-implementation},
we show  the equivalence of the collocation formulations with Galerkin formulations with numerical integration.   
%
We present in
Section \ref{section-Numerical-experiments} numerical results regarding the accuracy and the condition numbers. We also apply our approach to the time-dependent problem with mixed diffusion. Lastly, we summarize our findings and discuss some computational issues for future research.

\section{Fractional Laplacian of some generalized multi-quadratic functions}
\label{sec:frac-lap-gmq}

In this section, we derive the key pseudo-spectral formula for GMQ RBFs and present a collocation method based on 
the formula.

\subsection{Derivations}\label{subsection-pseudo-spectral}

For $x=(x_1,x_2,\cdots,x_d) \in \mathbb{R}^d$,   we use $|x|$ to denote its Euclidean norm.

The  fractional Laplacian is defined as the  singular integral, 
\begin{equation}\label{eq:fracLap-v1}  
	(-\Delta)^{\alpha/2}u(x) := c_{ d,\alpha}  \int_{\mathbb{R}^d } \frac{u(x) - u(y)} 
	{|x-y|^{d+\alpha}} \, dy,  \qquad \ 
	c_{d,\alpha} := \frac{2^\alpha\Gamma(\frac{\alpha+d}{2})}{\pi^{d/2} \abs{\Gamma(-\alpha/2)}},
\end{equation} 
where $\Gamma (\cdot)$ stands for the gamma function.  If $\alpha \geq 1$, the integrand is not integrable in the Lebesgue sense, and integration  is understood in the  sense of Cauchy  principal value;  see \cite{Samko-2002},  
One of the most important properties  of the fractional Laplacian is its Fourier transform, 
\begin{eqnarray}\label{fourier-property-laplace}
	(-\Delta)^{\alpha/2}u(x)=\mathcal{F}^{-1}[|\omega|^{\alpha}\mathcal{F}(u)],
\end{eqnarray}
where $\mathcal{F}$ denotes the standard Fourier transform. This property is often used to be one of equivalent definitions for the fractional operator. Based on the property 
\eqref{fourier-property-laplace},
by the translation and scaling properties of Fourier transform, it is not hard to prove the following useful properties; see also \cite{SamkoKM1993}. 

\begin{lem}[Translation and Scaling Properties]
	For function $v$, assume that $\mathcal{V}(x):=(-\Delta)^{\alpha/2} v(x)$ is well defined for $x\in \mathbb{R}^d$. Then it satisfies the following properties: 
	\begin{eqnarray}\label{translation-property}
		(-\Delta)^{\alpha/2}[v(x-x_0)]= \mathcal{V}(x-x_0),
	\end{eqnarray}
	for any fixed point $x_0\in \mathbb{R}^d$, and 
	\begin{eqnarray}\label{scaling-property}
		(-\Delta)^{\alpha/2}[v(c x)]=|c|^{\alpha} \mathcal{V}(cx),
	\end{eqnarray}
	for any scalar $c\in \mathbb{R}$. 
\end{lem}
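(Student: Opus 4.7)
The plan is to verify both identities by a direct change of variables in the singular-integral representation \eqref{eq:fracLap-v1}, and then remark that the Fourier-transform route sketched in \eqref{fourier-property-laplace} gives an equally short alternative. Both methods are short; the only subtlety is bookkeeping, so I will not belabor routine steps.

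First I would handle the translation property \eqref{translation-property}. Starting from the definition, one writes
\begin{equation*}
(-\Delta)^{\alpha/2}[v(\,\cdot\,-x_0)](x) = c_{d,\alpha}\int_{\mathbb{R}^d} \frac{v(x-x_0)-v(y-x_0)}{|x-y|^{d+\alpha}}\,dy,
\end{equation*}
and then substitutes $z=y-x_0$. Since $dy=dz$ and $|x-y|=|(x-x_0)-z|$, the integral becomes exactly the defining integral for $\mathcal{V}$ evaluated at the point $x-x_0$. The only point requiring care is that for $\alpha\ge 1$ the integral is a Cauchy principal value; but the substitution $z=y-x_0$ maps a symmetric ball around $y=x$ to a symmetric ball around $z=x-x_0$, so the PV structure is preserved.

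Next I would do the scaling property \eqref{scaling-property}. Write
\begin{equation*}
(-\Delta)^{\alpha/2}[v(c\,\cdot\,)](x) = c_{d,\alpha}\int_{\mathbb{R}^d} \frac{v(cx)-v(cy)}{|x-y|^{d+\alpha}}\,dy,
\end{equation*}
and substitute $z=cy$, so that $dy=|c|^{-d}\,dz$ and $|x-y|^{d+\alpha}=|c|^{-(d+\alpha)}|cx-z|^{d+\alpha}$. Collecting the powers of $|c|$ yields the prefactor $|c|^{d+\alpha}\cdot|c|^{-d}=|c|^{\alpha}$, and the remaining integral is the defining integral of $\mathcal{V}$ at the point $cx$; again the PV symmetry is preserved because $z=cy$ maps a symmetric ball around $y=x$ (after scaling) to a symmetric ball around $z=cx$. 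The case $c=0$ is trivial (both sides vanish under the convention that $\mathcal{V}(0)$ is well defined), and the sign of $c$ is absorbed into the absolute value because $z=cy$ inverts orientation in odd dimensions but the Jacobian determinant carries an absolute value once we integrate against a positive measure.

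The main (and only) obstacle is the principal-value justification for $\alpha\ge 1$. I would address this by performing both substitutions first on the truncated integral $\int_{|x-y|>\eps}$, where everything is Lebesgue integrable, establishing the identity there, and then passing to the limit $\eps\to 0^+$ using the translation/scaling invariance of the truncation sets. Alternatively, for a cleaner write-up, I would simply invoke \eqref{fourier-property-laplace} together with $\mathcal{F}[v(\,\cdot\,-x_0)](\omega)=e^{-i\omega\cdot x_0}\mathcal{F}[v](\omega)$ and $\mathcal{F}[v(c\,\cdot\,)](\omega)=|c|^{-d}\mathcal{F}[v](\omega/c)$, then change variables $\eta=\omega/c$ inside the inverse Fourier transform to collect the factor $|c|^{\alpha}$; this sidesteps the PV issue entirely by working on the Fourier side.
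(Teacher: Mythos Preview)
Your proof is correct. The paper does not write out a detailed argument; it simply remarks that the lemma follows from the Fourier characterization \eqref{fourier-property-laplace} together with the standard translation and scaling identities for the Fourier transform, and refers to \cite{SamkoKM1993}. Thus the paper's intended route is precisely the Fourier-side alternative you sketch at the end of your proposal, whereas your primary argument works directly in the singular-integral representation via a change of variables, with explicit care taken for the principal-value truncation when $\alpha\ge 1$. Both approaches are short and equally valid: the Fourier route is slightly cleaner because it avoids the PV bookkeeping, while your integral-side argument has the advantage of being self-contained (it does not appeal to the equivalence of the two definitions of $(-\Delta)^{\alpha/2}$) and makes transparent exactly where the factor $|c|^{\alpha}$ comes from.
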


The following lemma plays a pivotal role in deriving the pseudo-spectral formulas in this work. 
\begin{lem}
	The  radially symmetrical function $(1+ |x|^2)^{\beta}$, $x\in \mathbb{R}^d$, with  {\color{black}   $\beta \in \mathbb{R}/\mathbb{N}_0$} possesses the Fourier transform
	\begin{eqnarray}\label{fourier-property-inverse-quadratic}
		\mathcal{F}\big[(1+ |x|^2)^{\beta}\big](\omega)=\frac{2^{1+\beta}}{\Gamma(-\beta)} |\omega|^{-\beta-d/2} K_{d/2+\beta}(|\omega|),\quad \mathbf{\omega} \neq 0.
	\end{eqnarray}
	Here $K_{\beta}$ is the modified Bessel function of the third kind of order $\beta \in \mathbb{C}$. Moreover,  for any $\beta \in \mathbb{R}$, it holds that 
	\begin{eqnarray}\label{Laplacian-property-inverse-quadratic}
		-\Delta \big[(1+ |x|^2)^{\beta}\big]=-[2{\color{black}d}\beta +4\beta (\beta-1)] (1+|x|^2)^{\beta-1} +4\beta (\beta-1) (1+|x|^2)^{\beta-2} .
	\end{eqnarray}
\end{lem}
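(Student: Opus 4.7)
The plan is to handle the two identities separately. The Laplacian formula \eqref{Laplacian-property-inverse-quadratic} is a direct computation: writing $f(x)=(1+|x|^2)^\beta$ and applying the chain and product rules gives
$$\partial_i^2 f(x) = 2\beta(1+|x|^2)^{\beta-1} + 4\beta(\beta-1)\,x_i^2\,(1+|x|^2)^{\beta-2},$$
and summing over $i=1,\ldots,d$ yields
$$\Delta f(x) = 2d\beta(1+|x|^2)^{\beta-1} + 4\beta(\beta-1)\,|x|^2\,(1+|x|^2)^{\beta-2}.$$
Using the algebraic identity $|x|^2=(1+|x|^2)-1$ to split the last factor and regrouping the two resulting terms recovers exactly \eqref{Laplacian-property-inverse-quadratic}. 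No restriction on $\beta$ is needed for this part.

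For the Fourier identity \eqref{fourier-property-inverse-quadratic}, the plan is to represent $(1+|x|^2)^{\beta}$ as a superposition of Gaussians through the Gamma integral
$$(1+|x|^2)^{\beta} = \frac{1}{\Gamma(-\beta)}\int_0^\infty t^{-\beta-1}\,e^{-t(1+|x|^2)}\,dt,\qquad \beta<0,$$
swap the order of integration via Fubini, and apply the Gaussian Fourier transform. Under the symmetric convention $\mathcal{F}[f](\omega)=(2\pi)^{-d/2}\int f(x)e^{-i\omega\cdot x}\,dx$ implicit in \eqref{fourier-property-laplace}, this reduces the identity to the single-variable integral
$$\int_0^\infty t^{-\beta-d/2-1}\,e^{-t-|\omega|^2/(4t)}\,dt,$$
which is one of the standard integral representations of $K_{d/2+\beta}(|\omega|)$ (it supplies a factor $2(|\omega|/2)^{d/2+\beta}\,K_{d/2+\beta}(|\omega|)$). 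Collecting the prefactors $\tfrac{1}{\Gamma(-\beta)}$, $(2t)^{-d/2}$ from the Gaussian transform, and the $2(|\omega|/2)^{d/2+\beta}$ from the Bessel representation produces exactly the right-hand side of \eqref{fourier-property-inverse-quadratic}. To pass from $\beta<0$ to the full range $\beta\in\mathbb{R}\setminus\mathbb{N}_0$, I would invoke analytic continuation in $\beta$: both sides, viewed as tempered distributions in $x$ for each fixed $\omega\neq 0$, depend holomorphically on $\beta$ off the poles of $\Gamma(-\beta)$ at $\beta\in\mathbb{N}_0$, and agree on a half-line, hence everywhere.

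The main obstacle is the Fourier identity, specifically justifying the Fubini exchange and the analytic continuation once $(1+|x|^2)^{\beta}$ is no longer integrable (or even bounded). The cleanest route is to fix attention first on a strip like $\beta<-d/2$, where $(1+|x|^2)^{\beta}\in L^1(\mathbb{R}^d)$ and all manipulations are classical, and then extend by identity of holomorphic functions of $\beta$. The Laplacian step, by contrast, is a routine pointwise computation with no analytic subtleties.
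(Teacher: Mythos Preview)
Your proposal is correct. For the Laplacian identity you do exactly what the paper does (``direct calculation''), only you spell out the chain-rule computation and the $|x|^2=(1+|x|^2)-1$ regrouping explicitly.

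For the Fourier identity your route differs from the paper's: the paper simply quotes \cite[Theorem 8.15]{Wendland-2005} without argument, whereas you give a self-contained derivation via the subordination formula $(1+|x|^2)^{\beta}=\Gamma(-\beta)^{-1}\int_0^\infty t^{-\beta-1}e^{-t(1+|x|^2)}\,dt$, the Gaussian Fourier transform, and the integral representation of $K_\nu$, followed by analytic continuation in $\beta$. This is in fact the standard proof (and essentially what Wendland does), so the difference is one of presentation rather than substance; your version has the virtue of being self-contained. One small slip: the Bessel integral $\int_0^\infty t^{-\beta-d/2-1}e^{-t-|\omega|^2/(4t)}\,dt$ equals $2(|\omega|/2)^{-(d/2+\beta)}K_{d/2+\beta}(|\omega|)$, with a \emph{negative} exponent, not $2(|\omega|/2)^{d/2+\beta}$ as you wrote; with the correct sign the prefactors do combine to $2^{1+\beta}/\Gamma(-\beta)\cdot|\omega|^{-\beta-d/2}$ as claimed.
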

\begin{proof}
	The formula \eqref{fourier-property-inverse-quadratic} is	{\color{black}directly from}  \cite[Page 109, Theorem 8.15]{Wendland-2005}.  The direct calculation leads to \eqref{Laplacian-property-inverse-quadratic}. 
\end{proof}

Based on the above lemma, we can derive the following formula, the pseudo-spectral relation between the inverse multi-quadratic RBFs. 
\begin{thm} \label{RBF-Lemma-pseudo-spectral-relation}
	When  {\color{black}$d-\alpha >0$ or} $d-\alpha $ is not even number, we have 
	\begin{eqnarray}\label{frac-lap-speudo-spectral-relation}
		(-\Delta)^{\alpha/2} \bigg( \Phi_{d,\alpha}(x)  \bigg)= \mu_{d,\alpha} \Phi_{d,-\alpha}(x),
	\end{eqnarray}
	where
	\begin{eqnarray}
		\Phi_{d,\alpha}(x): = (1+ |x|^2)^{\alpha/2-d/2},\quad   \mu_{d,\alpha}=\frac{2^{\alpha}\Gamma(d/2+\alpha/2) }{\Gamma(d/2-\alpha/2)}.
	\end{eqnarray}
\end{thm}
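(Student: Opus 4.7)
The plan is to pass to Fourier side and use the Fourier characterization \eqref{fourier-property-laplace} of the fractional Laplacian, combined with the explicit Fourier transform \eqref{fourier-property-inverse-quadratic} for generalized multiquadrics. Write $\Phi_{d,\alpha}(x)=(1+|x|^2)^{\beta_1}$ with $\beta_1=(\alpha-d)/2$; the hypothesis $d-\alpha>0$ or $d-\alpha$ not even exactly rules out $\beta_1\in\mathbb{N}_0$, which is the case excluded from \eqref{fourier-property-inverse-quadratic}. Similarly, $\Phi_{d,-\alpha}(x)=(1+|x|^2)^{\beta_2}$ with $\beta_2=-(\alpha+d)/2$, and since $\alpha>0$ and $d\ge 1$ we have $\beta_2<0$, so formula \eqref{fourier-property-inverse-quadratic} also applies to $\Phi_{d,-\alpha}$.

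Substituting $\beta=\beta_1$ in \eqref{fourier-property-inverse-quadratic}, I would obtain
\begin{equation*}
\mathcal{F}[\Phi_{d,\alpha}](\omega)=\frac{2^{1+(\alpha-d)/2}}{\Gamma((d-\alpha)/2)}\,|\omega|^{-\alpha/2}\,K_{\alpha/2}(|\omega|),
\end{equation*}
after simplifying the exponent $-\beta_1-d/2=-\alpha/2$ and the Bessel index $d/2+\beta_1=\alpha/2$. Multiplying by $|\omega|^\alpha$ gives
\begin{equation*}
|\omega|^\alpha\,\mathcal{F}[\Phi_{d,\alpha}](\omega)=\frac{2^{1+(\alpha-d)/2}}{\Gamma((d-\alpha)/2)}\,|\omega|^{\alpha/2}\,K_{\alpha/2}(|\omega|).
\end{equation*}
Next I would compute $\mathcal{F}[\Phi_{d,-\alpha}]$ the same way with $\beta=\beta_2$, using the symmetry $K_{-\nu}=K_\nu$ to get $K_{-\alpha/2}=K_{\alpha/2}$; this yields
\begin{equation*}
\mathcal{F}[\Phi_{d,-\alpha}](\omega)=\frac{2^{1-(\alpha+d)/2}}{\Gamma((\alpha+d)/2)}\,|\omega|^{\alpha/2}\,K_{\alpha/2}(|\omega|).
\end{equation*}
Comparing the two displays shows that $|\omega|^\alpha\mathcal{F}[\Phi_{d,\alpha}]=\mu_{d,\alpha}\,\mathcal{F}[\Phi_{d,-\alpha}]$, where the prefactor ratio collapses to
\begin{equation*}
\mu_{d,\alpha}=2^{(\alpha-d)/2+(\alpha+d)/2}\,\frac{\Gamma((\alpha+d)/2)}{\Gamma((d-\alpha)/2)}=\frac{2^\alpha\Gamma(d/2+\alpha/2)}{\Gamma(d/2-\alpha/2)},
\end{equation*}
matching the claimed constant. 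Taking inverse Fourier transform and invoking \eqref{fourier-property-laplace} delivers the identity.

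The main obstacle is not algebraic but functional-analytic: justifying \eqref{fourier-property-laplace} for $\Phi_{d,\alpha}$, which need not lie in $L^1$ or $L^2$ for large $\alpha$. I would handle this by interpreting everything in the sense of tempered distributions, since $\Phi_{d,\alpha}\in\mathcal{S}'(\mathbb{R}^d)$ is a polynomially bounded smooth function, its Fourier transform is a tempered distribution coinciding with \eqref{fourier-property-inverse-quadratic} as a locally integrable function on $\mathbb{R}^d\setminus\{0\}$ (the mild singularity of $K_{\alpha/2}(|\omega|)$ at the origin is integrable under the stated hypothesis), and multiplication by $|\omega|^\alpha$ is a well-defined operation on this distribution. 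The boundary case $\Gamma(d/2-\alpha/2)=\infty$ (i.e., $d/2-\alpha/2$ a nonpositive integer) is precisely what the hypothesis on $d-\alpha$ excludes, so the right-hand side $\mu_{d,\alpha}\Phi_{d,-\alpha}$ is well defined throughout.
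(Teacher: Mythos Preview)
Your proof is correct and follows exactly the same strategy as the paper: compute the Fourier transforms of both sides via \eqref{fourier-property-inverse-quadratic}, use the symmetry $K_{-\nu}=K_\nu$, and compare. You actually give more detail than the paper (which leaves the algebra and the distributional justification implicit), and your handling of the constant and of the hypothesis on $d-\alpha$ is spot on.
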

\begin{proof}
{\color{black}Denote  $\beta=(\alpha-d)/2$.} 	It suffices to verify that the Fourier transforms on both sides of \eqref{frac-lap-speudo-spectral-relation} are equal. Using the properties \eqref{fourier-property-laplace}, \eqref{fourier-property-inverse-quadratic} together with the fact $K_{\beta}= K_{-\beta}$ leads to the desired result.  {\color{black}For $\beta<0$, see the derivation by \cite{Huang-2014-gmq}. }
\end{proof}

This formula is the starting point of our meshless method. We will use the formula \eqref{frac-lap-speudo-spectral-relation} to design a singular integration-free method for the hyper-singular integral fractional Laplacian. 
Though the dimension number $d$ can not be equal to the order $\alpha$ in the formula, the conditions are not restrictive in most applications, as we are usually interested in multi-dimension problems with  $d\geq 2$ and $\alpha<2$. 

{\color{black}
\begin{rem}
By the semigroup property of the fractional Laplacian operator, we have  
	\begin{eqnarray}\label{samko-2001-formula}
		(-\Delta)^{1-\alpha/2} \bigg(\Phi_{d,-\alpha}(x)\bigg)=(-\Delta)^{1-\alpha/2}    \bigg[\frac{1}{\mu_{d,\alpha}}(-\Delta)^{\alpha/2} \bigg(\Phi_{d,\alpha}(x)\bigg)\bigg]= \frac{1}{\mu_{d,\alpha}}(-\Delta) \bigg(\Phi_{d,\alpha}(x)\bigg).
	\end{eqnarray}
	Then using the formula \eqref{Laplacian-property-inverse-quadratic} with $\beta= \alpha/2-d/2$, we obtain that 
	\begin{eqnarray}
		(-\Delta)^{1-\alpha/2} \bigg(\Phi_{d,-\alpha}(x)\bigg)= \frac{2^{1-\alpha} \Gamma(d/2-\alpha/2+1)}{\Gamma(d/2+\alpha/2)}  \bigg[(\alpha-2)\Phi_{d,\alpha-2}(x)+ (d+2-\alpha)\Phi_{d,\alpha-4}(x)\bigg].
	\end{eqnarray}
	Replacing the order $2-\alpha$ by $\alpha$, we get another  formula in this work. That is,	for $\alpha<d+2$, we have 
	\begin{eqnarray}
		(-\Delta)^{\alpha/2} \bigg(\Phi_{d,\alpha-2}(x)\bigg)= \eta_{d,\alpha}^{(1)} \Phi_{d,-\alpha}(x)+ \eta_{d,\alpha}^{(2)}\Phi_{d,-\alpha-2}(x),
	\end{eqnarray}
	where 
	$$ \eta_{d,\alpha}^{(1)}=- \frac{\alpha 2^{\alpha-1} \Gamma(d/2+\alpha/2)}{\Gamma(d/2-\alpha/2+1)} ,\quad \eta_{d,\alpha}^{(2)}=  \frac{2^{\alpha} \Gamma(d/2+\alpha/2+1)}{\Gamma(d/2-\alpha/2+1)}.$$
	Such pseudo-spectral relation   may allows the computation of the case     $d = \alpha$. However, we haven't investigated the performance of the shape function $\Phi_{d,-\alpha}(x)$ in the numerical experiments. 


 

We also notice that	a similar formula  \eqref{samko-2001-formula} has been derived  by a different approach; see \cite[Page 324]{Samko-2002}.
\end{rem}
}

\subsection{Collocation methods for fractional Poisson equations}

Let $\Omega$ {\color{black}be}
a bounded and open domain in $\mathbb{R}^d$ with {\color{black} its  complement  and boundary  denoted by $ \Omega^c$ }and $ \partial \Omega$, respectively. 
Consider the following problem subject to the non-homogeneous boundary condition
{\color{black}\begin{eqnarray}\label{eq:frac-poisson-nonhomegeous}
	&& (-\Delta)^{\alpha/2} \tilde{u} = \tilde{f}, \quad  x \in  \Omega, \\
	&&  \tilde{u}=g, \quad x\in  \Omega^c. \notag
\end{eqnarray}
Denote the extension of $g(x)$ to 
$\Real^d$ by  $g(x)$. Let $u(x)=\tilde{u}(x)-g(x)$   and $f= \tilde{f} +  (-\Delta)^{\alpha/2}_{\Omega^c}g $  where   \begin{eqnarray*}
	&& (-\Delta)^{\alpha/2}_{\Omega^c}g(x) := 
	c_{d,\alpha}\int_{\Omega^c}\frac{g(y)} 
	{|x-y|^{d+\alpha}} \, dy.
\end{eqnarray*}	Then $u(x)$ solves }
\begin{eqnarray}\label{eq:frac-poisson}
	&& (-\Delta)^{\alpha/2} u= f, \quad  x \in  \Omega, \\
	&& {\color{black} u=0, \quad x\in  \Omega^c.} \notag
\end{eqnarray}

Let $X_{\Omega}=\{x_i\in \Omega \}$,  $X_{\partial\Omega}=\{x_i\in \partial\Omega \}$   with the points all assumed to be distinct. Let   $X_{\bar{\Omega}}= X_{\Omega} \cup X_{\partial\Omega} $. 
The mesh norm for $X$ relative to $\Omega$ is $h=h_{X,\Omega}:=\sup_{x\in \Omega}\min_{x_j\in X}|x-x_j| $, which measures the maximum distance of  any points in $\Omega$ can be from $X_{\Omega}$. The other useful length is the separation radius, 
$q=q_{X,\Omega}:= \frac{1}{2} \min_{j\neq k} |x_j-x_k|$. It is clear that $h_{X,\Omega} \geq q_{X,\Omega}$; the equality can hold only for a uniform distribution of points on an interval. The mesh ratio $\rho=\rho_{X,\Omega}:= h/q \geq 1 $ measures how uniformly points in $X$ are distributed in $\Omega$. If $\rho$ is small, we say it is quasi-uniformly distributed.   
The collocation method for \eqref{eq:frac-poisson} can be described as follow: at the scattered points $X_{\Omega}$ and the boundary $X_{\partial\Omega}$
\begin{eqnarray}
	&& (-\Delta)^{\alpha/2} u(x_i) = f(x_i), \quad  x_i \in  X_{\Omega}, \\
	&& u(x_i)=0, \quad x_i\in  X_{\partial \Omega}. 
\end{eqnarray}
Let the approximated solution $u_N(x)= \sum_{j=1}^N\lambda_j \phi_j(x)$, and
replace  the  exact  solution $u$ by the approximated one $u_N$.  {\color{black} Denote $N$ and $N_o$ the total number of points associted with the collocation points set $X_{\bar{\Omega}}$ and $X_{\Omega}$, respectively.} Using the pseudo-spectral relation in  Theorem \ref{RBF-Lemma-pseudo-spectral-relation} leads to the linear system  
\begin{eqnarray}
	&&\sum_{j=1}^N \lambda_j \bigg(\varepsilon^\alpha  \mu_{d,\alpha} \psi_j(x_i) + (-\Delta)^{\alpha/2}_{\Omega^c} \phi_j \bigg) =f(x_i), \quad  1\leq i\leq N^o,  \label{eqn:discrete-interior}\\ 
	&& \sum_{j=1}^{N}\lambda_j \phi_j(x_i) =
	0,\quad  \quad N^o+1\leq i\leq N, \label{eqn:discrete-boundary}
\end{eqnarray}
where 	$\phi_j(x) = ( \varepsilon^2+| x-x_j|^2)^{\alpha/2-d/2},\, \psi_j(x) = ( \varepsilon^2+| x-x_j|^2)^{-\alpha/2-d/2}
$ are from Theorem \ref{RBF-Lemma-pseudo-spectral-relation} .
%
%
The collocation points are  quasi-uniformly distributed. In Section \ref{section-PSF-GF}, we use some quadrature points such that the collocation method above is equivalent to a Galerkin method with numerical integration, where we can establish convergence of the method.


{\color{black}
	
	\begin{rem}\label{rem-reformulation-of-the-scheme}
Denote  $	X_{D/\Omega}=\{x_i \in \Omega^c \cap D  \} 
$ and  $X_{D}= X_{\Omega} \cup X_{D/\Omega} $. Assume that the bounded domain $\Omega$ can be embedded in the unit disk $D$. Otherwise, we can use the scaling properties stated in Lemma  \ref{scaling-property} to equivalently reformulate the problem. 
To implement  the scheme \eqref{eqn:discrete-interior}-\eqref{eqn:discrete-boundary}, we need to compute the integral  $(-\Delta)^{\alpha/2}_{\Omega^c} \phi_j (x)$ for $x\in \Omega$. The strategy is to split the domain $\Omega^c$ into the two parts, the bounded one plus the unbounded one.   That is $\Omega^c= D/\Omega \cup D^c$ or $(-\Delta)^{\alpha/2}_{\Omega^c}= (-\Delta)^{\alpha/2}_{D/\Omega} + (-\Delta)^{\alpha/2}_{D^c}$ .  We will describe the computation of the second part $(-\Delta)^{\alpha/2}_{D^c} $ in the next section.  For the integration over the bounded part  $ D/\Omega$, we can use the standard high-accuracy quadrature rule to compute the regular integral. If the boundary of the domain $\Omega$ is very complex, however, it may increase the overhead of computation to handle the complex geometry.  To keep the simplicity of collocation method, it suffices to consider  collocation points in the larger domain,  the unit disk $D$ instead of $\Omega$, and associate the total number $N$ with the collocation points set $X_D$.  In this case, the operator  $(-\Delta)^{\alpha/2}_{\Omega^c}$ associates with the complex domain $\Omega^c$ in the scheme  \eqref{eqn:discrete-interior} is replaced by $ (-\Delta)^{\alpha/2}_{D^c}$. 
	\end{rem}

}

\subsection{Implementation}	
\label{subsuction-complement-domain}


Here we provide the computation of $(-\Delta)^{\alpha/2}_{\color{black}D^c} \phi_j (x_i)$, denoted as $(B_\phi)_{i,j}$, the integral on the complement of the unit interval  in 1D or disk in  2D. 
An equivalent implementation of \eqref{eqn:discrete-interior}-\eqref{eqn:discrete-boundary} will be discussed in Section \ref{section-collocation-reformulation-implementation}.

The scattered data points/centers $x_i$ are located in the  interior of  $\Omega$ to avoid the singularity. 
Moreover,  when  the solution in the interior domain is of primary interest, it suffices to choose the centers inside the domain.
To have a square linear system, the number of centers is  the same as that of the scattered data points or the collocation points.

In 1D, we have  
\begin{eqnarray}
	(B_{\phi})_{i,j}=	\int_{1}^{\infty} \frac{\big(\varepsilon^2+|y-x_j|^2\big)^{\alpha/2-1/2}} 
	{(y-x_i)^{1+\alpha}} \, dy + \int_{-\infty}^{-1} \frac{\big(\varepsilon^2+|y-x_j|^2 \big)^{\alpha/2-1/2}} 
	{(x_i-y)^{1+\alpha}} \, dy=:	\mbox{I}_{i,j}+ \mbox{II}_{i,j}.
\end{eqnarray}
It suffices to consider the integral of $\mbox{I}_{i,j}$. Making the change of variable $y=1/s$ gives 
\begin{eqnarray}
	\mbox{I}_{i,j} &=& 	\int_{1}^{\infty} \frac{\big(\varepsilon^2+(y-x_j)^2\big)^{\alpha/2-1/2}} 
	{(y-x_i)^{1+\alpha}} \, dy \nonumber\\
	&=&  \int_{0}^{1} \big(s^2\varepsilon^2+(1-x_j s)^2\big)^{\alpha/2-1/2} 
	(1-x_i s)^{-1-\alpha} \, ds \notag.
\end{eqnarray}
Then using Gauss-Jacobi  quadrature with the Jacobi index pair $(0,0)$ associated with the interval $(0,1)$, we have 
\begin{eqnarray}
	\mbox{I}_{i,j}
	&\approx &   \sum_{k=1}^K w_k b_{i,k}c_{k,j},\quad b_{i,k}=
	(1-x_i s_k)^{-1-\alpha}, \quad  c_{k,j}= \big(s^2_k\varepsilon^2+(1-x_j s_k)^2\big)^{\alpha/2-1/2}. \label{def-matrix-B-C}
\end{eqnarray}
Analogously, for the second integral $\mbox{II}$, we have 
\begin{eqnarray}
	\mbox{II}_{i,j}
	&\approx &  \sum_{k=1}^K w_k b^\prime_{i,k}c^\prime_{k,j},\quad b^\prime_{i,k}=
	(1+x_i s_k)^{-1-\alpha}, \quad  c^\prime_{k,j}= \big(s^2_k\varepsilon^2+(1+x_j s_k)^2\big)^{\alpha/2-1/2}. 
\end{eqnarray}
Here we can see that  $b_{i,k}=b^{\prime}_{N+1-i,k}$ and $c_{k,j}=c^{\prime}_{k,N+1-j}$ for $1<i,j<N$.

In the computation, the number of grid points $N$ is far larger than the integration quadrature number $K$.  Thus, for the matrix $B_{\phi}$, we have the low-rank decomposition $B_{\phi}=BDC+B^\prime D C^\prime $. 
{\color{black} Here  $B= (b_{ij}) $ is a $N$ by $K$ and $C=(C_{ij})$ is a $K$ by $N$ matrix with the entries defined from (2.19) and $B'= (b'_{ij})$ and $C'= (c'_{ij})$ are from (2.20).  $D$ is a $K$ by $K$ diagonal matrix with entries $D_{k,k}=w_k$. In the numerical experiments in this work, $K$ is taken around $10$. }

Next, we consider the  2D case. Note that $x_i=(x_{i,1},x_{i,2})$,  $x_j=(x_{j,1},x_{j,2})$ and $y=(y_1,y_2) $. Making the polar coordinate transform $y_1=r\cos(\theta) $ and $y_2= r\sin(\theta)$, we obtain that 
\begin{eqnarray}
	&&(B_{\phi})_{i,j}= \int_{|y|>1} \frac{\big(\varepsilon^2+|y-x_j|^2\big)^{\alpha/2-1}} 
	{|x_i-y|^{2+\alpha}} \, dy \nonumber \\
	&=&	 \int_1^{\infty}\int_0^{2\pi}
	\frac{
		\big[\varepsilon^2+ (r\cos(\theta)-x_{j,1})^2+ (r\sin(\theta)- x_{j,2})^2 \big]^{\alpha/2-1}
	}{
		\big[(r\cos(\theta)-x_{i,1})^2+ (r\sin(\theta)- x_{i,2})^2 \big]^{\alpha/2+1} } rd\theta dr. 
\end{eqnarray}
Analogous to the 1D case, by change of variable $r=1/s$,  then we have  
\begin{eqnarray}
	(B_{\phi})_{i,j}
	=	 \int_0^{1}\int_0^{2\pi}
	\frac{
		\big[\varepsilon^2 s^2+ (\cos(\theta)-x_{j,1} s)^2+ (\sin(\theta)- x_{j,2} s)^2 \big]^{\alpha/2-1}
	}{
		\big[(\cos(\theta)-x_{i,1} s)^2+ (\sin(\theta)- x_{i,2} s)^2 \big]^{\alpha/2+1} } sd\theta ds.
\end{eqnarray}
Like the 1D case, we use the Gauss quadrature in the $s$ direction. For the $\theta$ direction,  since the integrand is analytical and periodic in the theta direction, we use the standard simple but accurate rectangular rule to approximate the integral. Precisely, we have 
\begin{eqnarray}
	(B_{\phi})_{i,j} 
	\approx \frac{2\pi}{M} \sum_{k=1}^K \sum_{m=1}^{M}
	\frac{
		\big[\varepsilon^2 s_k^2+ (\cos(\theta_m)-x_{j,1} s_k)^2+ (\sin(\theta_m)- x_{j,2} s_k)^2 \big]^{\alpha/2-1}
	}{
		\big[(\cos(\theta_m)-x_{i,1} s_k)^2+ (\sin(\theta_m)- x_{i,2} s_k)^2 \big]^{\alpha/2+1} } s_kw_k.
\end{eqnarray}
In the practical computation, the product $KM$ is far less than the total number $N$, the degree of freedoms. Thus we have the low-rank decomposition for the matrix $B_{\phi}$. 

\section{Stability and error estimates}
\label{section-PSF-GF}

\subsection{Stability in  Sobolev spaces} \label{section-preliminary}

We use the standard notation for Sobolev spaces with non-negative integer $m$, let 
\begin{eqnarray}
	H^m (\Omega)=\{v\in L^2(\Omega) : \|v\|_{H^m(\Omega)} < \infty \},	\quad  \|v\|_{H^m (\Omega)}= \bigg( \sum_{k=0}^m \sum_{|\beta|=k}\|D^{\beta} v \|^2_{L^2(\Omega)}  \bigg)^{1/2}  .
\end{eqnarray} 
Here  $\beta=(\beta_1,\cdots,\beta_d)$ is an multi-index and $|\beta|=\beta_1+\cdots+\beta_d$. 
%
For $0<s<1$, a fractional order Sobolev space $H^s(\Omega)$ \cite{Adams75} is defined by 
$$
H^{s}(\Omega) = \{v\in L^2(\Omega): \|v\|_{H^s(\Omega)} <\infty\}, \quad
\|v\|_{H^s(\Omega)} = \left( \| v\|_{L^2(\Omega)}^2 + |v|_{H^s(\Omega)}^2 
\right)^{1/2}, $$
where  the semi-norm  
\begin{equation}\label{Hs-norm-semi}
	|v|_{H^s(\Omega)} = \left(\iint_{\Omega\times \Omega} \frac{(v(x)-v(y))^2}{|x-y|^{d+2s}} dx dy\right)^{1/2}
\end{equation}

For $s>1$ let $s=m+\sigma$ with $m$ as the largest integer no greater than $s$ and with
$\sigma = s - m \in [0, 1)$.
Then 
$$
H^s(\Omega) = \{v\in H^m(\Omega): |D^{\beta}v| \in H^{\sigma} ~\mbox{for~ any}~ \beta~  
\mbox{such~ that}~ |\beta|=m \} 
$$
with the norm 
$$
\|v\|_{H^s(\Omega)} = \left( \|v\|_{H^m(\Omega)}^2 
+ \sum_{|\beta|=m} |D^{\beta} v|_{H^\sigma(\Omega)}^2 \right)^{1/2}.
$$
For every function $v\in L^2(\Omega)$, we denote by $\tilde{v}$ the extension by zero of $v$ to $\mathbb{R}$, i.e., $\tilde{v}(x)= v(x)$ if $x\in \Omega$ and $\tilde{v}(x)=0$ otherwise. 
Introduce the extension spaces to characterize the solution space for the fractional order problems.  Define $$\tilde{H}^s(\Omega):= \{v\in H^s(\Omega): \tilde{v} \in H^s(\mathbb{R}^d) \} $$ equipped with norm $ \|v\|_{\tilde{H}^s(\Omega)} := \| \tilde{v} \|_{H^s(\mathbb{R}^d)} $.
The zero trace space $H^s_0(\Omega)$ can be defined as the closure of $C_0^{\infty}(\Omega)$ 
with respective to the norm  of $\, H^s(\Omega)$. The following facts are standard: (e.g., see \cite[Page 33, Theorem 3.19]{Alexandre-Guermond-2021})
\begin{eqnarray}
	&&	H^s(\Omega) = H_0^s(\Omega) = \tilde{H}^s(\Omega),\quad  s<\frac{1}{2}; \\
	&& H^s(\Omega) = H_0^s(\Omega) \neq \tilde{H}^s(\Omega),\quad  s= \frac{1}{2};\\
	&& H^s(\Omega) \neq H_0^s(\Omega)= \tilde{H}^s(\Omega),  \quad s>\frac{1}{2} \mbox{~and~} s-\frac{1}{2}\neq \mathbb{N}.	
\end{eqnarray}
Let $s>0$. By using $L^2(\Omega)$ as a pivot space, we have that duality pairing between $H_0^s(\Omega)$ and its dual $H^{-s}(\Omega)= (H_0^s(\Omega)) ^{\prime} $.

\subsection{Well-posedness}

Consider the fractional diffusion   equation (FDE) on a bounded domain with the  following Dirichlet boundary condition,
\begin{eqnarray}
	&& \nu(-\Delta)u(x) + (-\Delta)^{\alpha/2} u(x) = f(x), \quad  x\in  \Omega,  \label{eqn:frac-Laplacian}\\
	&& u=0, \quad x \in  {\color{black}\Omega^c}, \label{eqn:frac-Laplacian-BC}
\end{eqnarray}
where $\nu\geq 0$,  $f(x)$ is a given  function. 

We define 
\begin{equation}\label{def-rho-regional-operator}
	\rho_{\Omega}(x) =c_{d,\alpha}\int_{\Omega^c}\frac{1}{|x-y|^{d+\alpha}}\,dy, \quad (-\Delta)_{\Omega}^{\alpha/2}v(x) := c_{ d,\alpha} \int_{\Omega} \frac{v(x) - v(y)} 
	{|x-y|^{d+\alpha}} \, dy,
\end{equation}
where $c_{d,\alpha}$ is defined in \eqref{eq:fracLap-v1}. 
For $v$ vanishing outside of the domain,  
\begin{eqnarray}\label{eq-conection}
	(-\Delta)^{\alpha/2} v(x)= (-\Delta)_{\Omega}^{\alpha/2} v(x)+ \rho_{\Omega}(x) v(x).	
\end{eqnarray}
Based on this connection, one can readily extend all the results discussed in this work to the regional operator. 

The variational  formulation of the problem \eqref{eqn:frac-Laplacian}-\eqref{eqn:frac-Laplacian-BC} is to find $u\in \tilde{H}^{1}(\Omega),$ such that 
\begin{eqnarray}\label{variational-formulation}
	a(u,v) := \nu(\nabla u, \nabla v) + 	((-\Delta)_{\Omega}^{\alpha/2}u,v)+ (\rho_{\Omega} u,v) = (f,v), \quad \forall v \in \tilde{H}^{1}(\Omega).
\end{eqnarray} 
%
%
The bilinear form  $a(\cdot, \cdot)$ is symmetrical, coercive and continuous \cite{AcoBor15,Mixed-diffusion-regularity-2022}.  
The wellposedness of the variational form is the direct consequence of Lax-Milgram's theorem. 
That is, if  $f\in {H}^{-1}(\Omega)$,  then there exists a unique 
solution $u\in \tilde{H}^{1}(\Omega)$ to the problem \eqref{variational-formulation}. If $\nu=0$, then there exists a unique 
solution $u\in \tilde{H}^{\alpha/2}(\Omega)$ to the problem \eqref{variational-formulation} when $f\in {H}^{-\alpha/2}(\Omega)$.

\begin{rem}
	One can readily extend the discussion throughout the paper to the case 
	of non-homogeneous boundary conditions by standard techniques. 
	It is possible to make a sufficiently smooth extension to the interior of domain $\Omega$.  
	%
	On sufficiently smooth domains, the regularity of the solution is dominated by both the right-hand side $f(x)$ and the exterior  data $g(x)$. 
	Higher regularity of the solution may be expected if the  $f(x)$ and  $g(x)$  are regular enough and compatible. 
\end{rem}
Throughout the following, for simplicity of the discussion, we only consider the fractional diffusion dominant model problem, $\nu=0$ in \eqref{eqn:frac-Laplacian}, unless otherwise stated.

\subsection{Convergence of an equivalent method}
Below we show the  convergence of a Galerkin method with numerical integration,  which is equivalent to the collocation method with selected collocation points. 
The equivalence of the two methods is shown in Section \ref{section-collocation-reformulation-implementation}.
We  establish the Galerkin method and its convergence. Assume that the numerical integration errors are negligible, the convergence order of the collocation method will be the same as the Galerkin reformulation.

\subsubsection{A  conforming Galerkin method for $\alpha\in (0,1)$}

Denote the trial space 
\begin{eqnarray}
	V_N (\Omega)= \mbox{Span} \{\phi_j(x), x_j\in X_{\Omega}\},
\end{eqnarray}
where  the  shape function  comes  from Theorem \ref{RBF-Lemma-pseudo-spectral-relation} and reads
\begin{eqnarray}
	\phi_j(x) = ( \varepsilon^2+| x-x_j|^2)^{\alpha/2-d/2},\quad x\in \Omega.  \label{def-shape-parameter-gmq}
\end{eqnarray} Here $\varepsilon>0$ is the \textit{shape parameter} that controls the flatness of the basis function. 
Note that
the RBFs in Theorem \ref{RBF-Lemma-pseudo-spectral-relation} can be used at the cost of double evaluation of the basis functions, for which the computational cost is negligible compared to other  RBFs, such as Gaussian ones.

The Lagrange function $L_i(x)$ centered at $x_i \in X_{\Omega}$ is defined to be the unique RBFs interpolant satisfying $L_i(x_j)=\delta_{i,j} $ for all $x_j \in X_{\Omega}$. It is obvious that 
\begin{equation}\label{conforming-subspace}
	V_N (\Omega)= \mbox{Span} \{L_j(x), {x_j\in X_{\Omega}}\}.
\end{equation}

When $\alpha<1$, recall that $H^{\alpha/2}(\Omega)= \tilde{H}^{\alpha/2} (\Omega) $.   The approximated solution space $V_N (\Omega)$ belongs to $\tilde{H}^{\alpha/2} (\Omega)$, i.e.,  $V_N (\Omega)\subset  \tilde{H}^{\alpha/2} (\Omega)  $.  With this fact and setup, we present the conforming  Galerkin method for the problem \eqref{eqn:frac-Laplacian}-\eqref{eqn:frac-Laplacian-BC} with $\nu=0$. 
The RBFs based Galerkin method is to find $u_N=\sum_{x_j\in {\Omega}}u_N(x_j) L_j(x) \in V_N (\Omega)$ such that 
\begin{eqnarray}\label{galerkin-scheme}
	a(u_N,v_N)=(f,v_N),\quad \forall v_N\in V_N (\Omega).
\end{eqnarray}

The well-posedness of discrete problem \eqref{galerkin-scheme} can be readily shown by Lax-Milgram's theorem, i.e.,  there exists a unique solution $u_N\in H^{\alpha/2}(\Omega)$ such that
\begin{eqnarray}
	\|u_N\|_{H^{\alpha/2}(\Omega)} \leq c \|f\|_{H^{-\alpha/2}(\Omega)}. 
\end{eqnarray}

To analyze the convergence of the conforming Galerkin method, we need the following lemma.
\begin{lem}
	Let $u$ and $u_N$ solves \eqref{eqn:frac-Laplacian}-\eqref{eqn:frac-Laplacian-BC} and \eqref{galerkin-scheme} respectively, then it holds that 
	\begin{eqnarray}
		\|u-u_N\|_{H^{\alpha/2}(\Omega) }\leq C \inf_{v_N\in V_N (\Omega)} \|u-v_N\|_{H^{\alpha/2}(\Omega)}
	\end{eqnarray}
\end{lem}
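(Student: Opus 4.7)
My plan is to prove this as a direct application of Céa's lemma, exploiting the fact (highlighted just before the statement) that $V_N(\Omega)\subset \tilde H^{\alpha/2}(\Omega)$ when $\alpha<1$, so the discretization is conforming and the continuous variational framework applies verbatim on $V_N(\Omega)$. The three ingredients I will need are (i) Galerkin orthogonality, (ii) coercivity of $a(\cdot,\cdot)$ on $\tilde H^{\alpha/2}(\Omega)$, and (iii) continuity of $a(\cdot,\cdot)$ on $\tilde H^{\alpha/2}(\Omega)$; both (ii) and (iii) are already invoked in the discussion of well-posedness preceding the lemma (with $\nu=0$, citing \cite{AcoBor15,Mixed-diffusion-regularity-2022}).

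First I would subtract \eqref{galerkin-scheme} from \eqref{variational-formulation} tested against an arbitrary $v_N\in V_N(\Omega)$. Since $V_N(\Omega)\subset \tilde H^{\alpha/2}(\Omega)$, the continuous form \eqref{variational-formulation} is valid for $v=v_N$, giving the Galerkin orthogonality
\begin{equation*}
a(u-u_N,v_N)=0,\qquad \forall\, v_N\in V_N(\Omega).
\end{equation*}
Next, I would invoke coercivity: there exists $c_0>0$ such that $c_0\|w\|_{H^{\alpha/2}(\Omega)}^2\le a(w,w)$ for all $w\in \tilde H^{\alpha/2}(\Omega)$. Applying this with $w=u-u_N$ and then using Galerkin orthogonality to insert an arbitrary $v_N\in V_N(\Omega)$,
\begin{equation*}
c_0\|u-u_N\|_{H^{\alpha/2}(\Omega)}^2\le a(u-u_N,u-u_N)=a(u-u_N,u-v_N).
\end{equation*}
Continuity of $a(\cdot,\cdot)$ then yields $a(u-u_N,u-v_N)\le C_1\|u-u_N\|_{H^{\alpha/2}(\Omega)}\|u-v_N\|_{H^{\alpha/2}(\Omega)}$. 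Dividing by $\|u-u_N\|_{H^{\alpha/2}(\Omega)}$ (trivially true if it vanishes) and taking the infimum over $v_N\in V_N(\Omega)$ delivers the claim with $C=C_1/c_0$.

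The argument is standard and has no genuine obstacle; the only point that needs care is the conformity $V_N(\Omega)\subset \tilde H^{\alpha/2}(\Omega)$. For $\alpha<1$ this follows from $H^{\alpha/2}(\Omega)=\tilde H^{\alpha/2}(\Omega)$, combined with the fact that the shape functions $\phi_j(x)=(\varepsilon^2+|x-x_j|^2)^{(\alpha-d)/2}$ are smooth on $\Omega$, so any finite linear combination lies in $H^{\alpha/2}(\Omega)$. This is precisely the reason the lemma is stated in this conforming regime; the nonconforming case $\alpha\ge 1$ will require a Strang-type perturbation argument handled separately in the paper.
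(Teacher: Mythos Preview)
Your proposal is correct and follows essentially the same approach as the paper: the paper's proof simply states that subtracting \eqref{galerkin-scheme} from \eqref{variational-formulation} and using the coerciveness of $a(\cdot,\cdot)$ yields the result immediately, which is precisely the C\'ea-type argument you spell out in detail.
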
 
\begin{proof}
	Subtracting \eqref{galerkin-scheme} from \eqref{variational-formulation} and using the coerciveness of bilinear form $a(\cdot,\cdot)$ lead to the desired results immediately. 
\end{proof}


For simplicity, we assume these approximation properties hold
\begin{eqnarray}\label{rbf-approximation-properties}
	\inf_{v_N\in V_N (\Omega)} \|u-v_N\|_{H^{\alpha/2} (\Omega)}  \leq C h^{s-\alpha/2} \|u\|_{H^{s} (\Omega)}. 
\end{eqnarray}
Under this assumption, we have the optimal error estimates for the Galerkin method.   
\begin{rem}
	Let  $ \mathcal{N}_{\Phi}(\Omega)$ be the native space for the RBF $\Phi$, see e.g., \cite{Wendland-2005,Narcowich-Ward-Wendland-2006}.  {\color{black}According to the reference \cite{Wendland-2005,Narcowich-Ward-Wendland-2006},  the native space is equivalent to the Sobolev space with the regularity index being infinity. For the test functions in Example 5.1 and 5.2  with infinitely smooth solutions such as Gaussian and GMQ RBF. }
	If the solution $u$ belongs to the native space, then we have spectral convergence:
	\begin{eqnarray}\label{rbf-approximation-properties-native-spaces}
		\inf_{v_N\in V_N (\Omega)} \|u-v_N\|_{H^{\alpha/2} (\Omega)}  \leq e^{-c/h}\|u\|_{\mathcal{N}_{\Phi}(\Omega)}. 
	\end{eqnarray}

 {\color{black}
 The approximation properties  of the generalized multi-quadratic functions in the assumption (3.17)  is a long-standing conjecture in the RBF community. Some progress has been made recently. For example, in the reference \cite{Ham-Ledford-2018}, the authors have proved the approximation properties when the index of GMQ  $\beta<-d-1/2$ or $\beta>1/2$ but $\beta$ is not an integer. Note that in this work we use $\beta=(\alpha-d)/2$.  When $d=1$, $\alpha\in(1,2]$  in the assumption \eqref{rbf-approximation-properties}  can be satisfied. However, for the other general cases, we are not able to verify this assumption.}
	
\end{rem}


\begin{thm}
	Let $u$ and $u_N$ solves \eqref{eqn:frac-Laplacian}-\eqref{eqn:frac-Laplacian-BC} and \eqref{galerkin-scheme} respectively. Assuming  \eqref{rbf-approximation-properties}, we have 
	\begin{eqnarray*}
		\|u-u_N\|_{H^{\alpha/2} (\Omega)} \leq C  h^{s-\alpha/2} \|u\|_{H^{s} (\Omega)}. 
	\end{eqnarray*}
	If the solution belongs to the native spaces, then we have the spectral convergence
	\begin{eqnarray*}
		\inf_{v_N\in V_N (\Omega)} \|u-v_N\|_{H^{\alpha/2} (\Omega)}  \leq e^{-c/h}\|u\|_{\mathcal{N}_{\Phi}(\Omega)}. 
	\end{eqnarray*}
\end{thm}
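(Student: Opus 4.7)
The plan is to combine the quasi-optimality estimate already proved as the preceding lemma with the approximation hypothesis \eqref{rbf-approximation-properties}, and then do the analogous step using the native-space bound \eqref{rbf-approximation-properties-native-spaces} for the spectral-convergence assertion. The structure is essentially a one-line Céa-type argument, so the real content has been packaged into the assumption; my proof will just make the chain of inequalities explicit.

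First I would invoke the lemma stated immediately before the theorem, which uses the coercivity and continuity of $a(\cdot,\cdot)$ on $\tilde H^{\alpha/2}(\Omega)$ together with Galerkin orthogonality (obtained by subtracting \eqref{galerkin-scheme} from \eqref{variational-formulation}) to produce
\[
  \|u-u_N\|_{H^{\alpha/2}(\Omega)} \;\le\; C \inf_{v_N\in V_N(\Omega)} \|u-v_N\|_{H^{\alpha/2}(\Omega)}.
\]
Since $V_N(\Omega)\subset \tilde H^{\alpha/2}(\Omega)$ (the conformity was recorded just before \eqref{galerkin-scheme} using $H^{\alpha/2}=\tilde H^{\alpha/2}$ for $\alpha<1$), this step is legitimate as stated.

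Next I would substitute the assumed approximation property \eqref{rbf-approximation-properties}: the infimum on the right is bounded by $C h^{s-\alpha/2}\|u\|_{H^s(\Omega)}$, which yields the first claimed estimate. For the second claim, the analogous substitution uses \eqref{rbf-approximation-properties-native-spaces} in place of \eqref{rbf-approximation-properties}; choosing $v_N$ to be the RBF interpolant onto the Lagrange basis \eqref{conforming-subspace} gives the exponential bound $e^{-c/h}\|u\|_{\mathcal{N}_\Phi(\Omega)}$. Combining with the quasi-optimality then produces the spectral convergence rate.

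The only conceptual subtlety worth flagging in the writeup is that the theorem is conditional on \eqref{rbf-approximation-properties}, which, as noted in the remark, is only known for generalized multiquadrics in restricted ranges of the index $\beta=(\alpha-d)/2$; hence the ``main obstacle'' is not the proof of the theorem itself (which is essentially immediate) but rather the verification of the hypothesis, which I would not attempt here and would explicitly point the reader to the cited literature for.
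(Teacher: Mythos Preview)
Your proposal is correct and matches the paper's approach: the paper gives no explicit proof of this theorem, treating it as an immediate consequence of the preceding C\'ea-type lemma combined with the assumed approximation bounds \eqref{rbf-approximation-properties} and \eqref{rbf-approximation-properties-native-spaces}. One minor observation: the second displayed inequality in the theorem statement is literally the infimum bound \eqref{rbf-approximation-properties-native-spaces} itself (it concerns $\inf_{v_N}\|u-v_N\|$, not $\|u-u_N\|$), so no appeal to quasi-optimality is even needed for that line---though applying it, as you do, would of course also yield the corresponding bound on $\|u-u_N\|$.
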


\begin{rem}
\color{black}    The index $s$ depends on the regularity of the solution. For example, for the problem with the zero truncated boundary condition,  the solution may be nonsmooth and $s=\alpha/2$. 
\end{rem}


\subsubsection{A non-conforming Galerkin  method for $\alpha \in (1,2)$}

As the solution to 
\eqref{eq:frac-poisson}
has compact support on $\Omega$, we adapt the global RBFs  below.

When $\alpha \in (1,2)$, however,  it is impossible to construct the approximated subspace which belongs to $\tilde{H}^{\alpha/2}(\Omega)=H_0^{\alpha/2}(\Omega) $. We can define the approximated solution space for practical computation by directly enforcing the zero boundary condition. More precisely, we  denote the trial space 
\begin{eqnarray}
	V_N (\bar{\Omega})= \mbox{Span} \{\phi_j(x), x_j\in X_{\bar{\Omega}}\}= {\color{black} \mbox{Span} \{L_j(x), x_j\in X_{\bar{\Omega}}\}},
\end{eqnarray}
and  define  the approximated solution space as
\begin{eqnarray}\label{nonconforming-subspace}
	V^o_{N} (\bar{\Omega})&:=&\mbox{Span}\{ {\color{black}L_j(x)}:  {\color{black}L_j(x)}\in V_{N}(\bar{\Omega})~\mbox{and}~ {\color{black}L_j(x_i)}=0~\mbox{for}~x_i\in X_{\partial\Omega} \}, \notag\\
	&=&\mbox{Span} \{L_j(x): {x_j\in X_{{\Omega}}}\}.
\end{eqnarray}  
Here $L_j(x)$'s are associated with all the nodal points $x_j\in \bar{\Omega}$, which are different from that in \eqref{conforming-subspace}. 

The nonconforming  Galerkin method for the problem \eqref{eqn:frac-Laplacian}-\eqref{eqn:frac-Laplacian-BC} is to find $u_N\in 		V^o_{N} (\bar{\Omega})$ such that 
\begin{eqnarray}\label{nonconform-galerkin-scheme}
	\tilde{a}(u_N,v_N)=((-\Delta)_{\Omega}^{\alpha/2}u_N,v_N) + (\rho_{\Omega}u_N,v_N) =(f,v_N), \quad \forall v_N\in 		V^o_{N} (\bar{\Omega}).
\end{eqnarray}

Similar to the case of $\alpha<1$,   we can readily show the well-posedness of discrete problem \eqref{nonconform-galerkin-scheme}  by  Lax-Milgram's theorem. In fact, taking the test function $v_N=u_N$, we have 
\begin{eqnarray*}
	\tilde{a}(u_N,u_N) = \frac{1}{2} c_{d,\alpha} \iint_{\Omega\times \Omega}  \frac{ |u_N(x)-u_N(y)|^2}
	{|x-y|^{d+\alpha}}\,dy dx +  \int_{\Omega} \rho_{\Omega} (x) |u_N(x)|^2 dx \geq c\|u_N\|^2_{ {H}^{\alpha/2} (\Omega)},
\end{eqnarray*}
where we have used the fact $ \rho_{\Omega} (x) \approx  \mbox{dist}(x,\partial \Omega)^{-\alpha}$ and  $\mbox{dist}(x,\partial \Omega)$ denotes the distance function. For the right-hand side, we have 
\begin{eqnarray*}
	(f,u_N)\leq \|f\|_{L^2 (\Omega)} \|u_N\|_{L^2(\Omega)} \leq c\|f\|_{L^2 (\Omega)} \|u_N\|_{H^{\alpha/2}(\Omega)} .
\end{eqnarray*}
Combining them together yields the stability estimate $\|u_N\|_{ {H}^{\alpha/2} (\Omega)} \leq c \|f\|_{L^2 (\Omega)} $. 

Next, we consider the convergence. Note that the solution to the continuous problem belongs to $\tilde{H}^{\alpha/2}(\Omega) = H^{\alpha/2}_0(\Omega)$  while the solution to the discrete problem belongs to $H^{\alpha/2}(\Omega)$.
Since the discrete space satisfies the zero boundary condition in one dimension, we can readily show the nonconforming method coincides with the conforming method so that we can obtain  the convergence straightforwardly. 
%
The discrete solution would converge to the continuous one if the constructed approximated solution sufficiently approximates the zero function on the boundary. Unfortunately, at this point, we cannot prove this conclusion in the multi-dimensional case when $d\geq 2$.   We will provide the numerical evidence in Section \ref{section-2d-numerical-example} to support our conclusion. 


\begin{rem}
	The above discussion does not cover the critical case $\alpha=1$ in 2D. The well-posedness of {\color{black}the discrete problem is similar}.  
\end{rem}

\section{An equivalent formulation and implementation}\label{section-collocation-reformulation-implementation}

In this section, we present a linear system from Galerkin formulations with numerical integration, equivalent to 
the collocation method
\eqref{eqn:discrete-interior}-\eqref{eqn:discrete-boundary} and use it in our 
simulations.  



For the Galerkin formulations, we need to discretize the integral in the continuous bilinear form \eqref{galerkin-scheme} or \eqref{nonconform-galerkin-scheme}. To this end, we introduce the numerical quadrature using the interpolants. Define 
\begin{eqnarray}
	Q_N(f):= \int_{\Omega}  f_N(x)dx=\sum_{x_j\in X_{\Omega}} f(x_j) w_{j},\quad w_j= \int_{\Omega} L_j(x) dx,
\end{eqnarray}
where the interpolant $ f_N(x)=\sum_{x_j\in X_{\Omega}} f(x_j) L_j(x) $. 
Let $u_N(x)=  \sum_{x_j\in \bar{\Omega}} u_N(x_j) L_j(x)$ be the numerical solution and denote  the identity operator by $ \mathbb{I}$. We are seeking the nodal value $u_N(x_j)$.
Then the Galerkin method with the numerical integration is to find 
\begin{eqnarray}\label{eq-scheme-num-int}
	\sum_{x_j\in X_{\Omega}} (	(-\Delta)_{\Omega}^{\alpha/2}+ \rho_{\Omega} \mathbb{I}) {u}_N (x)|_{x=x_j}v_N(x_j) w_{j}=\sum_{x_j\in X_{\Omega}} f(x_j) v_N(x_j) w_{j} ,\quad \forall v_N\in V_N (\Omega).
\end{eqnarray}

Assume that the integrand $f(x)$  belongs to the native spaces,  $f\in \mathcal{N}_{\Phi}(\Omega)$.  Under this assumption, we have  
\begin{eqnarray}
	\bigg|Q_N(f)-\int_{\Omega} f(x)dx\bigg| \leq e^{-c/h}\|f\|_{\mathcal{N}_{\Phi}(\Omega)},
\end{eqnarray}
From the estimate, we can see the effect of the numerical integration is negligible when the number of scatter points is large enough.

Plugging in the solution $u_N(x)=  \sum_{x_j\in \bar{\Omega}} u_N(x_j) L_j(x)$ into the equation \eqref{eq-scheme-num-int} and taking the test function as $v_N(x)= L_i(x)$,  we arrive at the linear system 
\begin{eqnarray}
	&& (	(-\Delta)_{\Omega}^{\alpha/2}+ \rho_{\Omega} \mathbb{I})  u_N(x_i)=  \sum_{x_j\in \bar{\Omega}} u_N(x_j)(	(-\Delta)_{\Omega}^{\alpha/2}+ \rho_{\Omega} \mathbb{I})   L_j(x_i)=f(x_i), \quad x_i\in \Omega,\\
	&&	u_N(x_i)=0,\quad x_i\in  \partial\Omega. 
\end{eqnarray}
or equivalently \begin{eqnarray}\label{rbf-linear-system}
	(	(-\Delta)_{\Omega}^{\alpha/2}+ \rho_{\Omega} \mathbb{I})  u_N(x_i)=  \sum_{x_j\in \Omega} u_N(x_j)(	(-\Delta)_{\Omega}^{\alpha/2}+ \rho_{\Omega} \mathbb{I})   L_j(x_i)=f(x_i), \quad x_i\in \Omega
\end{eqnarray}
due to the zero boundary conditions. 

Next, we need to deduce the coefficients  $(	(-\Delta)_{\Omega}^{\alpha/2}+ \rho_{\Omega} \mathbb{I})  L_j(x_i)$. 
The strategy is to transform the Lagrange functions basis $\{L_j(x)\}$ back into the shape functions basis $\{ \phi_j(x)\}$ and make use of the pseudo-spectral relation for the fractional Laplacian on the full space. To illustrate the idea, we
suppose that the number of scatter points for $X_{\Omega}$ and $X_{\bar{\Omega}}$ is $N^o$ and $N$, respectively. The difference $N^b=N-N^o$ denotes precisely the number of the scatter points on the boundary.
Introduce the vector-value functions,
\begin{eqnarray}
	{\bf{L}}(x)=[L_1(x),L_2(x),\cdots, L_{N}(x)]^T,\quad {\bf{\Phi}}(x)=[\phi_1(x),\phi_2(x),\cdots, \phi_{N}(x)]^T
\end{eqnarray}
and  the matrix  $A_{\phi}=[{\bf{\Phi}}(x_1),{\bf{\Phi}}(x_2),\cdots, {\bf{\Phi}}(x_N)]$.  Here $A_{\phi}$ is $N\times N$ symmetrical positive definite matrix due to the RBFs properties. 
Noting that the conversion between the Lagrange basis functions $\{L_j(x)\}$ and the shape functions  $\{\phi_j(x) \}$, that is $A_{\phi}{\bf{L}}(x)= {\bf{\Phi}}(x)$, we have 
\begin{eqnarray}\label{eq-matrix-vector-1}
	(	(-\Delta)_{\Omega}^{\alpha/2}+ \rho_{\Omega} \mathbb{I}) {\bf{L}}(x)= A^{-1}_{\phi} (	(-\Delta)_{\Omega}^{\alpha/2}+ \rho_{\Omega} \mathbb{I}) {\bf{\Phi}}(x),
\end{eqnarray}
where $j$-th component of the vector $ (	(-\Delta)_{\Omega}^{\alpha/2}+ \rho_{\Omega} \mathbb{I})  {\bf{\Phi}}(x)$ is $ (	(-\Delta)_{\Omega}^{\alpha/2}+ \rho_{\Omega} \mathbb{I}) \phi_j(x)$, which is easier to compute. In fact, 
using the pseudo-spectral relation in Theorem \eqref{RBF-Lemma-pseudo-spectral-relation},  we have 
\begin{eqnarray} 
	(	(-\Delta)_{\Omega}^{\alpha/2}+ \rho_{\Omega} \mathbb{I})	\phi_j(x)
	&=& (-\Delta)^{\alpha/2} \phi_j(x) + (-\Delta)^{\alpha/2}_{\Omega^c}\phi_j(x)   
	=	\mu_{d,\alpha}  \varepsilon^\alpha \psi_j(x)+ (-\Delta)^{\alpha/2}_{\Omega^c}\phi_j(x),
\end{eqnarray}
where $$
\psi_j(x) = ( \varepsilon^2+| x-x_j|^2)^{-\alpha/2-d/2}	,  \quad (-\Delta)^{\alpha/2}_{\Omega^c}\phi_j(x) := 
c_{d,\alpha}\int_{\Omega^c}\frac{\phi_j(y)} 
{|x-y|^{d+\alpha}} \, dy.$$ 
Denote ${\bf{\Psi}}(x)=[\psi_1(x),\psi_2(x),\cdots, \psi_{N}(x)]^T $.  Then  \eqref{eq-matrix-vector-1} is reduced to 
\begin{eqnarray}\label{eq-matrix-vector-2}
	(	(-\Delta)_{\Omega}^{\alpha/2}+ \rho_{\Omega} \mathbb{I}){\bf{L}}(x)= A^{-1}_{\phi} \bigg( {\bf{\Psi}}(x)  + (-\Delta)^{\alpha/2}_{\Omega^c}{\bf{\Phi}}(x) \bigg).
\end{eqnarray}
Recall that here we are only interested in the first $N^o$ components of $	(	(-\Delta)_{\Omega}^{\alpha/2}+ \rho_{\Omega} \mathbb{I}) {\bf{L}}(x)$. Thus we need to truncate the first $N^o$ rows of the inverse of  $N\times N$  matrix $A_{\phi}$. 

Denote $N\times N$ matrix  $A_{\psi}=[{\bf{\Psi}}(x_1),{\bf{\Psi}}(x_2),\cdots, {\bf{\Psi}}(x_N)]$ and $N\times N^o$ matrix  $$B^T_{\phi}=[ (-\Delta)^{\alpha/2}_{\Omega^c} {\bf{\Phi}}(x_1), (-\Delta)^{\alpha/2}_{\Omega^c} {\bf{\Phi}}(x_2),\cdots,  (-\Delta)^{\alpha/2}_{\Omega^c} {\bf{\Phi}}(x_{N^o})].$$
%
Then the linear system 
\eqref{rbf-linear-system} can be recast into the matrix-vector form
\begin{eqnarray}\label{eq-matrix-vector-3}
	\bigg(	(A_{\psi})_{1:N^o,1:N}+ B_{\phi} \bigg) (A_{\phi}^{-1})_{1:N,1:N^o} U=F ,
\end{eqnarray}
where $U=[u(x_1),u(x_2),\cdots u(x_{N^o})]^T $ and $F=[f(x_1),f(x_2),\cdots f(x_{N^o})]^T $.
In order to solve \eqref{eq-matrix-vector-3}, we introduce the intermediate variable $\Lambda=[\lambda_1,\lambda_2,\cdots, \lambda_N]^T $ and denote $N^b\times 1$ zero vector by ${\bf{0}} $. Then the system \eqref{eq-matrix-vector-3} is equivalent  to the following:
\begin{eqnarray}
	\bigg(	(A_{\psi})_{1:N^o,1:N}+ B_{\phi} \bigg) \Lambda &=&F,  \label{matrix-vector-eq-1} \\
	(A_{\phi})_{N^o+1:N,1:N}  \Lambda&=& {\bf{0}}, \label{matrix-vector-eq-11} 
\end{eqnarray}
and the evaluation step
\begin{eqnarray}
	U=	(A_{\phi})_{1:N^o,1:N} \Lambda      \label{matrix-vector-eq-2}.
\end{eqnarray}
It can be readily shown that this linear system is equivalent to 
\eqref{eqn:discrete-interior}-\eqref{eqn:discrete-boundary}.



\begin{rem}
	In our discretization, the resulting linear system consists of two generalized quadratic matrices. One of them is perturbated by the low-rank matrix. In work \cite{Burkard-Wu-Zhang-2021}, the dominant part of the linear matrix is the matrix with entries generated by the evaluation of hypergeometric functions at scatted data points. 
\end{rem}

\section{Numerical experiments}\label{section-Numerical-experiments}
In this section, we will study the performance of our collocation method for the equation with  smooth and nonsmooth data.
\begin{itemize}
	\item Case 1, the analytical solution  $u(x)=(1+|x|^2)^{-(d+1)/2}$, where $x\in \mathbb{R}^2$,  and the corresponding right-hand side is  $(-\Delta)^{\alpha/2}u(x)=\frac{\Gamma(d+\alpha)}{\Gamma(d)} \ _2F_1\bigg(\frac{d+\alpha}{2},\frac{d+\alpha+1}{2}; \frac{d}{2}, |x|^2 \bigg) $;
	see \cite[page 320]{Samko-2002}. 
	
	\item Case 2,  we test the compact support  function  $u (x)= (1-|x|^2)^p_+$ with $p>0$ on the bounded interval $(-1,1)$ in 1D and a unit disk in 2D, and the corresponding right-hand side (see \cite{Dyda12}) is   
	\begin{eqnarray}
		(-\Delta)^{\alpha/2}u(x) =  \frac{2^\alpha \Gamma(\frac{\alpha+d}{2}) \Gamma(p+1)}{\Gamma(d/2) \Gamma(-\alpha/2+p+1)} \, _2F_1 ((\alpha+d)/2, -p+\alpha/2, d/2, |x|^2 ),\quad x\in \mathbb{R}^2.
	\end{eqnarray}
\end{itemize}

For  simplicity, we use a direct solver (QR algorithm) to solve the problem and measure the error in the root mean square (RMS) sense as 
\begin{eqnarray}
	\mathcal{E}(N)=\bigg(\frac{1}{J} \sum_{j=1}^J\big[u(x_j)-u_N(x_j) \big]^2
	\bigg)^{1/2}\bigg/\bigg(\frac{1}{J} \sum_{j=1}^J |u(x_j) |^2
	\bigg)^{1/2},
\end{eqnarray}
and 
\begin{eqnarray}
	\widehat{	\mathcal{E}}(N)=\bigg(\frac{1}{J} \sum_{j=1}^J\big[(-\Delta)^{\alpha/2}u(x_j)-(-\Delta)^{\alpha/2}u_N(x_j) \big]^2
	\bigg)^{1/2}\bigg/\bigg(\frac{1}{J} \sum_{j=1}^J |(-\Delta)^{\alpha/2}u(x_j) |^2
	\bigg)^{1/2}.
\end{eqnarray}
We take a sufficiently large  $J$ so that the error $\mathcal{E}(N)$ is insensitive to the number of interpolation or test points.

For the integral on $\Omega^c$ described in Section \ref{subsuction-complement-domain}, we use Gauss–Jacobi quadrature 
rules with sufficiently large  $K,\,M$ in the numerical tests.

\subsection{One-dimensional examples}
By considering, solutions of different smoothness, we compare our results (GMQ) with  \cite{Burkard-Wu-Zhang-2021} using the Gaussian RBFs 
and fractional centered finite difference scheme (FDM) \cite{Hao-Zhang-Du-2021}. The Gaussian basis function we will examine is  \begin{equation}
	\phi_j(x)=	\exp(- |x-x_j|^2/\varepsilon^2), \label{def-shape-parameter-g}
\end{equation}
where $\varepsilon$ is called a shape parameter. 
	For the meshfree method using Gaussian RBFs in \cite{Burkard-Wu-Zhang-2021}, one  needs to compute the tail integral
	\begin{eqnarray*}
		(-\Delta)^{\alpha/2}_{\Omega^c}\phi_j(x_i) = 
		c_{d,\alpha}\int_{\Omega^c}\frac{\phi_j(y)} 
		{|x_i-y|^{d+\alpha}} \, dy= 	c_{d,\alpha}\int_{|y|\geq 1}\frac{\exp(- |y-x_j|^2/\varepsilon^2)} 
		{|x_i-y|^{d+\alpha}} \, dy.
	\end{eqnarray*}
	Due to the exponential decay of the integrand, the authors in \cite{Burkard-Wu-Zhang-2021} suggested truncating the unbounded domain to compute the integral. In the following tests, we take the computational domain as $1\leq |y|\leq 5$ (We have also tried the larger domain size and found almost no accuracy improvement).   

First, we consider the piece-wise analytical function  $(1-|x|^2)_+$ and present the numerical results in Table \ref{table-comparasion-1d-piecewise-smooth-1d}. 
Compared to FDM, the GMQ method is efficient in using less number of the degrees of freedom to achieve relatively high accuracy. In contrast, FDM yields less accurate results even with a large number of grid points. In particular, when the order $\alpha$ gets large, the accuracy deteriorates for FDM.  {\color{black} In particular, the finite difference method does not converge for alpha = 1.6.  The reason is that, for the finite difference scheme to converge, it requires the solution to have no less than
    $\alpha$-th derivative while the 
  the regularity index of the test function $(1-x^2)_+$ is  no more than 1.5 which is below the differential order $\alpha=1.6$.  } 
Also, the  GMQ method even achieves the accuracy of order $10^{-7}$ for  $\alpha=1.8$. 
Compared with the Gaussian RBFs method in \cite{Burkard-Wu-Zhang-2021}, our method is  more accurate, especially when 
$\alpha$ is large. 

\begin{table}[!ht]
	\centering
	\caption{ Comparison  among  generalized multiquadric (GMQ), Gaussian RBFs  methods and the finite difference method (FDM) in \cite{Hao-Zhang-Du-2021} for the exact solution $u(x)=(1-|x|^2)_+$. 
		The shape parameters in \eqref{def-shape-parameter-g} and \eqref{def-shape-parameter-gmq} are $1$ for Gaussian and $1.5$ for GMQ,  respectively. Test points are taken the same as the finite difference method.  }\label{table-comparasion-1d-piecewise-smooth-1d}
	\begin{tabular}{cccccccccc}  
		\hline \hline
		&	& \multicolumn{2}{c}{ $\alpha=0.4$}&   \multicolumn{2}{c}{ $\alpha=0.8$}  &\multicolumn{2}{c}{ $\alpha=1.2$}&\multicolumn{2}{c}{ $\alpha=1.6$} \\
		\cline{3-4} \cline{5-6} \cline{7-8}\cline{9-10}
		&$N$ &  $ \widehat{\mathcal{E}}(N)$ & rate & $\widehat{\mathcal{E}}(N)$ & rate  & $\widehat{\mathcal{E}}(N)$ & rate & $\widehat{\mathcal{E}}(N)$ & rate  \\
		\hline
		GMQ  & 2       & 5.64e-02     &          & 8.48e-02     &          & 1.22e-01     &          & 1.59e-01     &          \\ 
		&4       & 1.00e-02     &     2.50     & 1.74e-02     &     2.29     & 2.64e-02     &     2.21     & 3.70e-02     &     2.10    \\ 
		&8       & 1.11e-04     &     6.50     & 2.31e-04     &     6.23     & 4.56e-04     &     5.86     & 8.08e-04     &     5.52    \\ 
		&16       & 2.06e-06     &     5.75     & 1.65e-06     &     7.12     & 8.18e-07     &     9.12     & 4.06e-07     &    10.96    \\

		\hline
		Gaussian  & 2       & 1.85e-01     &          & 2.75e-01     &          & 4.02e-01     &          & 5.88e-01     &          \\ 
		&4       & 3.05e-02     &     2.60     & 5.59e-02     &     2.30     & 9.97e-02     &     2.01     & 1.84e-01     &     1.68    \\ 
		&8       & 1.17e-03     &     4.70     & 5.98e-03     &     3.22     & 3.02e-02     &     1.72     & 1.37e-01     &     0.42    \\ 
		&16       & 9.97e-04     &     0.23     & 7.02e-03     &    -0.23     & 5.47e-02     &    -0.86     & 2.52e-01     &    -0.87    \\ 
		\hline
		
		FDM &   128       & 3.14e-04     &          & 1.67e-03     &          & 5.14e-03     &          & 7.82e-03     &          \\ 
		&	256       & 1.48e-04     &     1.09     & 1.03e-03     &     0.70     & 4.10e-03     &     0.32     & 7.83e-03     &    -0.00    \\ 
		&	512       & 6.91e-05     &     1.09     & 6.35e-04     &     0.70     & 3.28e-03     &     0.32     & 7.70e-03     &     0.02    \\ 
		&	1024       & 3.23e-05     &     1.10     & 3.91e-04     &     0.70     & 2.62e-03     &     0.32     & 7.53e-03     &     0.03    \\   
		\hline \hline
	\end{tabular}
\end{table}

We then test a globally smoother function  $(1-|x|^2)^2_+$. Table \ref{table-comparasion-1d-piecewise-smooth-2-1d} shows that FDM and Gaussian methods have a substantial increase in the accuracy compared to results for the previous globally less regular function. But the accuracy of these two methods is still lower than that of our method. When $\alpha=1.6$, our method has $10^{-6}$ accuracy while the accuracy for Gaussian RBFs is of order $10^{-3}$

\begin{table}[!ht]
	\centering
	\caption{  Comparison  among  generalized multiquadric (GMQ), Gaussian RBFs  methods and the finite difference method (FDM)  in \cite{Hao-Zhang-Du-2021}  for the piece-wise function $u(x)=(1-|x|^2)^2_+$. The shape parameters are $1$ for Gaussian and $1.5$ for GMQ,  respectively. Test points are taken the same as the finite difference method. }\label{table-comparasion-1d-piecewise-smooth-2-1d}
	\begin{tabular}{cccccccccc}  
		\hline \hline
		&	& \multicolumn{2}{c}{ $\alpha=0.4$}&   \multicolumn{2}{c}{ $\alpha=0.8$}  &\multicolumn{2}{c}{ $\alpha=1.2$}&\multicolumn{2}{c}{ $\alpha=1.6$} \\
		\cline{3-4} \cline{5-6} \cline{7-8}\cline{9-10}
		&$N$ &  $\widehat{\mathcal{E}}(N)$ & rate & $\widehat{\mathcal{E}}(N)$ & rate  & $\widehat{\mathcal{E}}(N)$ & rate & $\widehat{\mathcal{E}}(N)$ & rate  \\
		\hline
		GMQ	& 2       & 5.37e-02     &          & 9.28e-02     &          & 1.50e-01     &          & 2.22e-01     &          \\ 
		&4       & 2.07e-02     &     1.37     & 3.81e-02     &     1.28     & 6.19e-02     &     1.28     & 9.51e-02     &     1.22    \\ 
		& 8       & 3.68e-04     &     5.82     & 7.99e-04     &     5.58     & 1.69e-03     &     5.20     & 3.28e-03     &     4.86    \\ 
		& 16       & 8.74e-06     &     5.40     & 7.35e-06     &     6.76     & 3.96e-06     &     8.74     & 2.38e-06     &    10.43    \\ 
		
		\hline
		Gaussian	&2       & 1.26e-01     &          & 1.76e-01     &          & 2.47e-01     &          & 3.42e-01     &          \\ 
		&	4       & 5.51e-02     &     1.19     & 9.21e-02     &     0.94     & 1.50e-01     &     0.72     & 2.39e-01     &     0.52    \\ 
		&	8       & 1.45e-03     &     5.25     & 3.14e-03     &     4.88     & 6.74e-03     &     4.48     & 1.56e-02     &     3.93    \\ 
		&	16       & 3.13e-05     &     5.53     & 2.04e-04     &     3.95     & 9.10e-04     &     2.89     & 2.11e-03     &     2.89    \\ 
		\hline
		FDM		&    128       & 4.04e-05     &          & 1.53e-04     &          & 4.70e-04     &          & 1.09e-03     &          \\ 
		&	256       & 1.06e-05     &     1.93     & 4.61e-05     &     1.73     & 1.82e-04     &     1.37     & 5.64e-04     &     0.95    \\ 
		&	512       & 2.75e-06     &     1.95     & 1.38e-05     &     1.74     & 7.16e-05     &     1.35     & 2.96e-04     &     0.93    \\ 
		&	1024       & 7.07e-07     &     1.96     & 4.14e-06     &     1.74     & 2.84e-05     &     1.33     & 1.57e-04     &     0.92    \\ 
		
		\hline \hline
	\end{tabular}
\end{table}

Last, we test the function  $(1-|cx|^2)^{\alpha}_+$, which has a weak singularity at the boundary when $c=1$ and in the interior when $c=2$. 
From Table \ref{table-comaparison-singular-function-1d}, we see that all methods have low accuracy and achieve almost the same performance. 
For the RBFs methods, the accuracy deteriorates for a relatively large  $N$.
For the solution with only interior weak singularity $c=2$, we observe stable performance as demonstrated in Table \ref{table-interior-singularity-1d}.

Here  we apply the scaling property \eqref{scaling-property} to obtain,  when
$v(x)=(1-|x|^2)^{p}_+$,  $$	(-\Delta)^{\alpha/2}v(lx)= l^{\alpha}	\frac{2^\alpha \Gamma(\frac{\alpha+d}{2}) \Gamma(p+1)}{\Gamma(d/2) \Gamma(-\alpha/2+p+1)} \, _2F_1 ((\alpha+d)/2, -p+\alpha/2, d/2, |l x|^2 ) .$$

\begin{table}[!ht]
	\centering
	\caption{  Comparison  among  generalized multiquadric (GMQ), Gaussian RBFs  methods and the finite difference method (FDM)  in \cite{Hao-Zhang-Du-2021}  for the weakly singular  function $u(x)=(1-|x|^2)^{\alpha}_+$.
		The shape parameters for both RBFs methods are $\varepsilon=2h$ with $h=2/N$.  }\label{table-comaparison-singular-function-1d}
	\begin{tabular}{cccccccccc}  
		\hline \hline
		&	& \multicolumn{2}{c}{ $\alpha=0.4$}&   \multicolumn{2}{c}{ $\alpha=0.8$}  &\multicolumn{2}{c}{ $\alpha=1.2$}&\multicolumn{2}{c}{ $\alpha=1.6$} \\
		\cline{3-4} \cline{5-6} \cline{7-8}\cline{9-10}
		&$N$ &  $\widehat{\mathcal{E}}(N)$ & rate & $\widehat{\mathcal{E}}(N)$ & rate  & $\widehat{\mathcal{E}}(N)$ & rate & $\widehat{\mathcal{E}}(N)$ & rate  \\
		\hline
		GMQ	&   64       & 1.73e-02     &          & 1.85e-02     &          & 5.75e-03     &          & 1.20e-02     &          \\ 
		&128       & 9.19e-03     &     0.91     & 1.02e-02     &     0.86     & 8.24e-03     &    -0.52     & 5.79e-03     &     1.05    \\ 
		&256       & 6.68e-03     &     0.46     & 1.32e-02     &    -0.38     & 2.26e-02     &    -1.46     & 2.83e-02     &    -2.29    \\ 
		&512       & 1.28e-02     &    -0.94     & 3.44e-02     &    -1.38     & 5.75e-02     &    -1.35     & 5.69e-02     &    -1.01    \\ 
		\hline
		Gaussian	&	64       & 1.50e-02     &          & 1.25e-02     &          & 5.11e-03     &          & 2.12e-02     &          \\ 
		& 128       & 5.20e-03     &     1.53     & 3.61e-03     &     1.79     & 7.61e-03     &    -0.57     & 1.47e-02     &     0.53    \\ 
		& 256       & 9.19e-03     &    -0.82     & 6.92e-03     &    -0.94     & 2.67e-03     &     1.51     & 8.72e-03     &     0.75    \\ 
		&	512       & 1.57e-02     &    -0.78     & 1.65e-02     &    -1.25     & 5.26e-03     &    -0.98     & 5.63e-03     &     0.63    \\ 
		\hline
		FDM		&    64       & 1.47e-02     &          & 1.10e-02     &          & 2.27e-03     &          & 7.90e-03     &          \\ 
		&	128       & 1.04e-02     &     0.50     & 7.72e-03     &     0.51     & 1.47e-03     &     0.63     & 5.24e-03     &     0.59    \\ 
		& 256       & 7.34e-03     &     0.50     & 5.43e-03     &     0.51     & 9.86e-04     &     0.57     & 3.59e-03     &     0.55    \\ 
		&	512       & 5.19e-03     &     0.50     & 3.82e-03     &     0.51     & 6.78e-04     &     0.54     & 2.49e-03     &     0.53    \\ 
		\hline \hline
	\end{tabular}
\end{table}

\begin{table}[!htb]
	\centering
	\caption{Generalized multiquadric RBFs method  for the weakly singular function   $u(x)=(1-|2x|^2)^{\alpha}_+$. The shape parameter $\varepsilon = 2h$ with $h=2/N$.  }\label{table-interior-singularity-1d}
	\begin{tabular}{ccccccccc}  
		\hline \hline
		& \multicolumn{2}{c}{ $\alpha=0.4$}&   \multicolumn{2}{c}{ $\alpha=0.8$}  &\multicolumn{2}{c}{ $\alpha=1.2$}&\multicolumn{2}{c}{ $\alpha=1.6$} \\
		\cline{2-3} \cline{4-5} \cline{6-7}\cline{8-9}
		$N$ &  $\widehat{\mathcal{E}}(N)$ & rate & $\widehat{\mathcal{E}}(N)$ & rate  & $\widehat{\mathcal{E}}(N)$ & rate & $\widehat{\mathcal{E}}(N)$ & rate  \\
		\hline
		256       & 1.61e-02     &          & 2.15e-02     &          & 1.20e-02     &          & 5.59e-03     &          \\ 
		512       & 1.14e-02     &     0.50     & 1.51e-02     &     0.51     & 8.33e-03     &     0.53     & 3.92e-03     &     0.51    \\ 
		1024       & 8.06e-03     &     0.50     & 1.06e-02     &     0.51     & 5.81e-03     &     0.52     & 2.76e-03     &     0.51    \\ 
		2048       & 5.70e-03     &     0.50     & 7.49e-03     &     0.50     & 4.07e-03     &     0.51     & 1.94e-03     &     0.51    \\ 
		
		\hline \hline
	\end{tabular}
\end{table}


\subsection{Two-dimensional steady problems}
\label{section-2d-numerical-example}


In the 2D  case, we  {\color{black} first}  consider a unit disk domain. 
For a fair comparison with Gaussian RBFs, we use the same setup as in \cite{Burkard-Wu-Zhang-2021}, i.e.,
\begin{eqnarray}\label{gridpoints-layout}
	X=	X_{\Omega} \cap X_{\partial \Omega} = \biggl\{\frac{l}{L} \bigg(\cos(2j\pi/(J+1)), \sin(2j\pi/(J+1))\bigg)  ~\mbox{for}~ 0\leq l\leq L,0\leq j\leq J  \biggr\}
\end{eqnarray}  
with the total number of data points $N=L*(J+1)+1$. In the numerical tests, we take $L=J=3,5,7,9,11$, and  $N=13,31,57,91,133$, correspondingly. From Table 
\ref{table:2D-smooth-case}, we can see a fast/spectral convergence of GMQ method, for the smooth solution, as in  the Gaussian RBF in \cite{Burkard-Wu-Zhang-2021}. 

\begin{table}[!htb] 
	\centering
	{	
		\caption{ The root mean square error  $\mathcal{E}(N)$ and the condition number $\mathcal{K}(N)$ in 2D when the solution is $u(x)=(1+|x|^2)^{-3/2}$. }\label{table:2D-smooth-case}
		\begin{tabular}{ccccccc}
			\hline\hline
			& \multicolumn{2}{c}{ $\varepsilon=1$} & \multicolumn{2}{c}{ $\varepsilon=1.5$} & \multicolumn{2}{c}{ $\varepsilon=2$} \\
			\cline{2-3} \cline{4-5} \cline{6-7}
			$N$ &  $\mathcal{E}(N)$ & $\mathcal{K}(N)$ &  $\mathcal{E}(N)$ & $\mathcal{K}(N)$&  $\mathcal{E}(N)$ & $\mathcal{K}(N)$  \\
			\hline
			13       & 1.86e-02     & 1.96e+03    & 2.50e-02     & 5.41e+04    & 7.02e-02     & 8.58e+05   \\  
			31       & 1.97e-03     & 3.80e+06    & 9.72e-04     & 2.43e+09    & 3.71e-03     & 4.79e+11   \\  
			57       & 2.27e-04     & 5.39e+10    & 2.92e-05     & 9.18e+14    & 3.77e-04     & 4.28e+16   \\  
			91       & 2.82e-05     & 2.17e+15    & 1.45e-06     & 5.22e+17    & 5.09e-04     & 2.31e+18   \\  
			133       & 2.98e-06     & 5.31e+17    & 1.35e-07     & 2.37e+18    & 3.30e-05     & 4.19e+19   \\  
			\hline \hline  
		\end{tabular}
} 	\end{table}

We also test a less smooth solution. Consider the equation with the nonsmooth solution $ (1-|x|^2)^{1+\alpha/2} $, we demonstrate the accuracy  in Table  \ref{table:2D_gauss_vs_multiq} and CPU time in  Table \ref{table:cpu_time_comp}. From Table  \ref{table:2D_gauss_vs_multiq}, GMQ method can achieve almost the same performance as Gaussian RBFs, using the same degrees of freedom of grid points. However, the computational cost of the Gaussian RBF method is several orders of magnitude larger than our  GMQ RBF method.

\begin{table}[!htb]
	\centering
	\caption{ Comparison  between  generalized multiquadric (GMQ) and Gaussian RBFs  method when the solution is function $u(x)=(1-|x|^2)^{1+\alpha/2}_+$.  The shape parameters are taken as $\varepsilon= 2h $ for both GMQ and Gaussian RBFs.  }\label{table:2D_gauss_vs_multiq}
	\begin{tabular}{cccccccccc}  
		\hline \hline
		&	& \multicolumn{2}{c}{ $\alpha=0.4$}&   \multicolumn{2}{c}{ $\alpha=0.8$}  &\multicolumn{2}{c}{ $\alpha=1.2$}&\multicolumn{2}{c}{ $\alpha=1.6$} \\
		\cline{3-4} \cline{5-6} \cline{7-8}\cline{9-10}
		&$N$ &  $\mathcal{E}(N)$ & rate & $\mathcal{E}(N)$ & rate  & $\mathcal{E}(N)$ & rate & $\mathcal{E}(N)$ & rate  \\
		\hline
		GMQ  & 13       & 4.92e-02     &          & 5.23e-02     &          & 6.39e-02     &          & 7.21e-02     &          \\ 
		&	53       & 1.88e-02     &     1.39     & 1.44e-02     &     1.86     & 1.32e-02     &     2.27     & 1.85e-02     &     1.96    \\ 
		&	209       & 5.80e-03     &     1.69     & 5.78e-03     &     1.32     & 5.91e-03     &     1.16     & 7.09e-03     &     1.38    \\ 
		&	825       & 1.37e-03     &     2.08     & 1.33e-03     &     2.11     & 1.26e-03     &     2.23     & 1.16e-03     &     2.61    \\ 
		&	3269       & 7.51e-04     &     0.87     & 7.58e-04     &     0.82     & 5.12e-04     &     1.30     & 4.97e-04     &     1.22    \\

		\hline		
		Gaussian & 13       & 1.66e-02     &          & 2.91e-02     &          & 4.50e-02     &          & 7.78e-02     &          \\ 
		& 53       & 6.14e-03     &     1.43     & 8.33e-03     &     1.80     & 1.65e-02     &     1.45     & 4.00e-02     &     0.96    \\ 
		& 209       & 4.08e-03     &     0.59     & 7.36e-03     &     0.18     & 9.13e-03     &     0.85     & 1.19e-02     &     1.76    \\ 
		& 825        & 2.32e-03     &     0.82     & 2.16e-03     &     1.77     & 1.58e-03     &     2.53     & 1.37e-03     &     3.12    \\

		\hline \hline
	\end{tabular}
\end{table}

\begin{table}[!htb]
	\centering
	\caption{   Comparison of the CPU time (measured in seconds) between generalized multiquadric (GMQ)  and Gaussian (G) RBFs method (The shape parameters for both methods are $\varepsilon = 2h$ with $h=2/N$)    for the nonsmooth solution $u(x)=(1-|x|^2)^{1+\alpha/2}_+$. }\label{table:cpu_time_comp}
	\begin{tabular}{ccccccccc}  
		\hline \hline
		& \multicolumn{2}{c}{ $\alpha=0.4$}&   \multicolumn{2}{c}{ $\alpha=0.8$}  &\multicolumn{2}{c}{ $\alpha=1.2$}&\multicolumn{2}{c}{ $\alpha=1.6$} \\
		\cline{2-3} \cline{4-5} \cline{6-7}\cline{8-9}
		$N$ &  GMQ & G & GMQ & G  & GMQ & G & GMQ & G  \\
		\hline
		13       & 6.48e-02     &   8.55e-01       & 1.33e-02     &   7.58e-02       & 1.30e-02     & 4.74e-02         & 1.41e-02     & 4.42e-02         \\ 
		53       & 4.94e-02     &   4.39e-01       & 3.63e-02     &  4.70e-01        & 4.50e-02     &   5.34e-01       & 3.57e-02     & 6.63e-01         \\ 
		209       & 1.37e-01     &    1.32e+01      & 1.29e-01     &   1.29e+01       & 1.33e-01     &  1.61e+01        & 1.25e-01     &  1.33e+01        \\ 
		825       & 7.48e-01     &    1.77e+02      & 5.86e-01     &      1.85e+02    & 5.80e-01     &    1.94e+02      & 5.74e-01     &   1.90e+02       \\ 
		3269       & 9.38e+00     &  $\geq$ 6 hours        & 9.30e+00     &  $\geq$ 6 hours        & 9.29e+00     &  $\geq$ 6 hours        & 9.18e+00     &   $\geq$ 6 hours       \\

		\hline \hline
	\end{tabular}
\end{table}

In the next example, we consider the elliptic equation with smooth right-hand side data $f=1$. The explicit solution is  $(1-|x|^2)_+^{\alpha/2}/\lambda_0^\alpha$ with $\lambda_0^\alpha=\Gamma(1+\alpha/2)^2$.  In the numerical test, we take  $N=111$ and $\varepsilon=0.8$ to compute the numerical solution and provide the comparison of the solution profiles for different fractional-order $\alpha$ in Figure  \ref{numerical-solution-2D} and the point-wise error in Figure \ref{pointwise-error-numerical-solution-2D}. 
From Figure  \ref{pointwise-error-numerical-solution-2D}, we can see the error near the boundary dominates, and the error gets smaller when $\alpha$ becomes larger, which agrees with the regularity  of the solution. 

{\color{black}
We also consider  a square domain $[-\sqrt{2}/2,\sqrt{2}/2 ]^2$ . For a non-disk domain, we use the strategy described in the Remark \ref{rem-reformulation-of-the-scheme} and embed the smaller square domain into the unit disk. We take the uniform grid points over the domain $[-1,1]^2$ with the step-size $h=1/32$. The shape parameter is taken as $\varepsilon=0.05$. The solution profile is shown in Figure \ref{numerical-solution-square-2D}, where we observe that the solution to the homogeneous boundary value problem with the smooth right-hand side has the sharp edge near the boundary for the smaller value of $\alpha$. This is consistent with the asymptotic regularity estimates in the literature. }

\begin{figure}[!ht]
	\centering
	\includegraphics[width=0.8\linewidth]{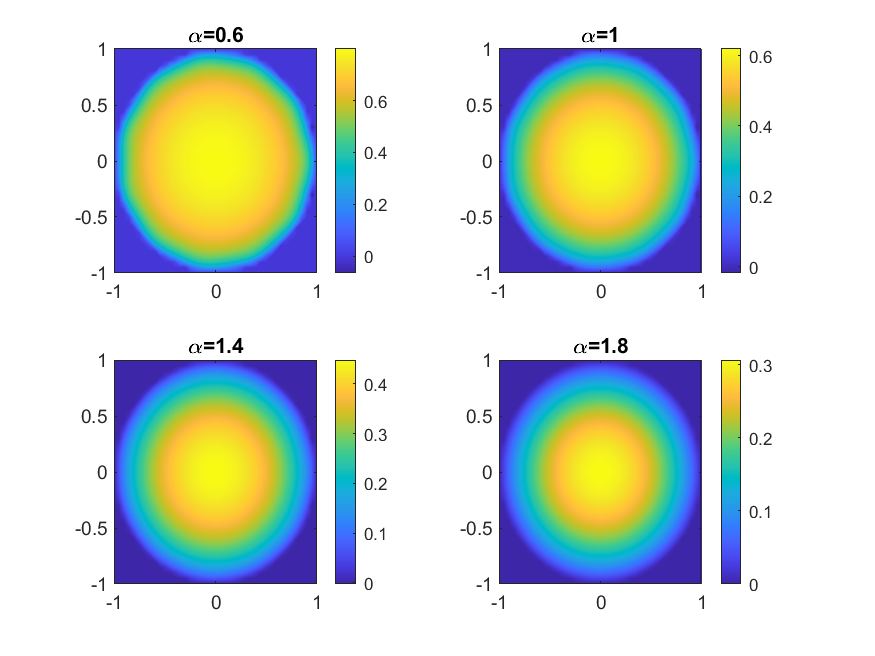}
	\caption{Numerical solution of the fractional Poisson problem cross reference on a unit disk domain for $f=1$ by GMQ with the shape parameter $\varepsilon=1$. }\label{numerical-solution-2D}
\end{figure}

\begin{figure}[!ht]
	\centering
	\includegraphics[width=0.8\linewidth]{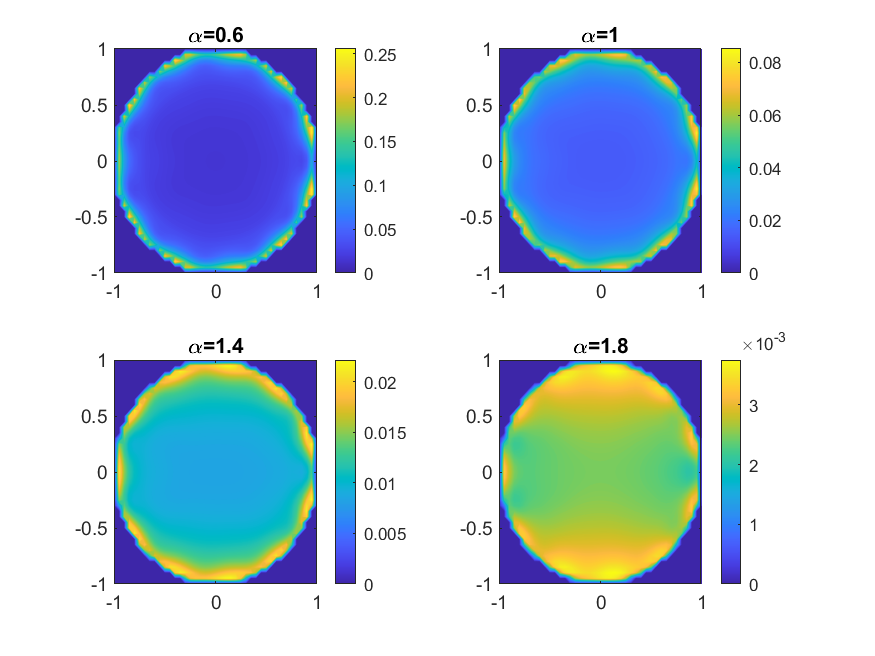}
	\caption{Pointwise  errors of GMQ with the shape parameter $\varepsilon=1$ solving the fractional Poisson problem cross reference on a unit disk. }\label{pointwise-error-numerical-solution-2D}
\end{figure}

\begin{figure}[!ht]
	\centering
	\includegraphics[width=0.8\linewidth]{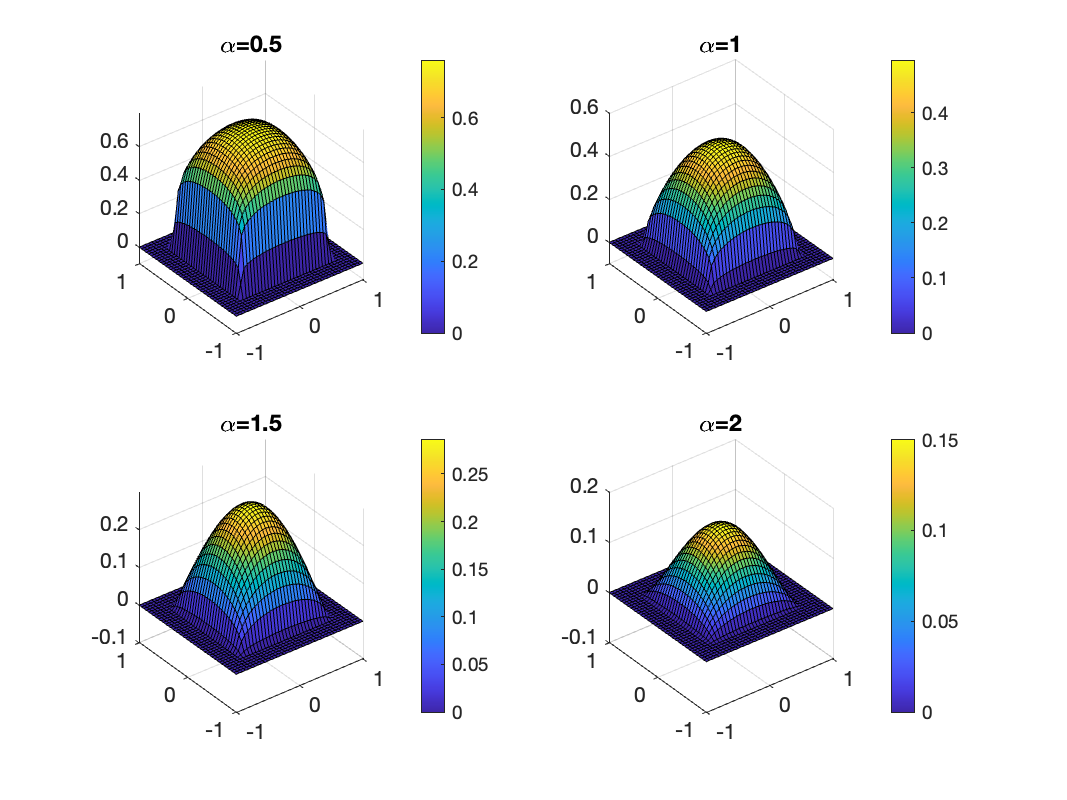}
	\caption{{\color{black}Numerical solution of the fractional Poisson problem cross reference on a square domain for $f=1$ by GMQ with the shape parameter $\varepsilon=0.05$ and the uniform grid points step-size $h=1/32$.} }\label{numerical-solution-square-2D}
\end{figure}

\subsection{Two dimensional time-dependent problems}
Consider the following parabolic  equation with fractional Laplacian 
\begin{eqnarray}\label{mixed-diffusion}
	u_t+ \chi(-\Delta)^{\alpha/2}u +(1-\chi) (-\Delta)u=0,\, x\in \set{x\in \mathbb{R}^2\,|\,|x|\leq 1}
\end{eqnarray}
subjected to the smooth initial data
$	u(x,0)=	e^{-16x_1^2-4x_2^2}$. 
This model is motivated by applications where
the two transport mechanisms are typically present: $-\Delta u$ for classical 
diffusion  and    $(-\Delta)^{\alpha/2}u$ for anomalous diffusion; see \cite{del-Castillo-Negrete-2014,Epps18},

First, we investigate the effect of the nonlocal term on diffusion. We use the standard second-order Crank-Nicolson scheme in time with the step size $dt=0.001$.
For the spatial discretization, we use the GMQ RBF collocation methods with the grid points in  \eqref{gridpoints-layout} with $L=J=8$. The total number of points is $N=73$. The snapshots of the solution for the model problem with different parameters $\theta$ are shown in  {\color{black}Figure \ref{comparison-mixed-diffusion}.}
We observe from the figure that the fractional Laplacian term in the model ($\chi=1$ or $\chi=0.5$) slows down the diffusion ($\chi=0$).

\begin{figure}[!ht]
	\centering
	\includegraphics[width=1\linewidth]{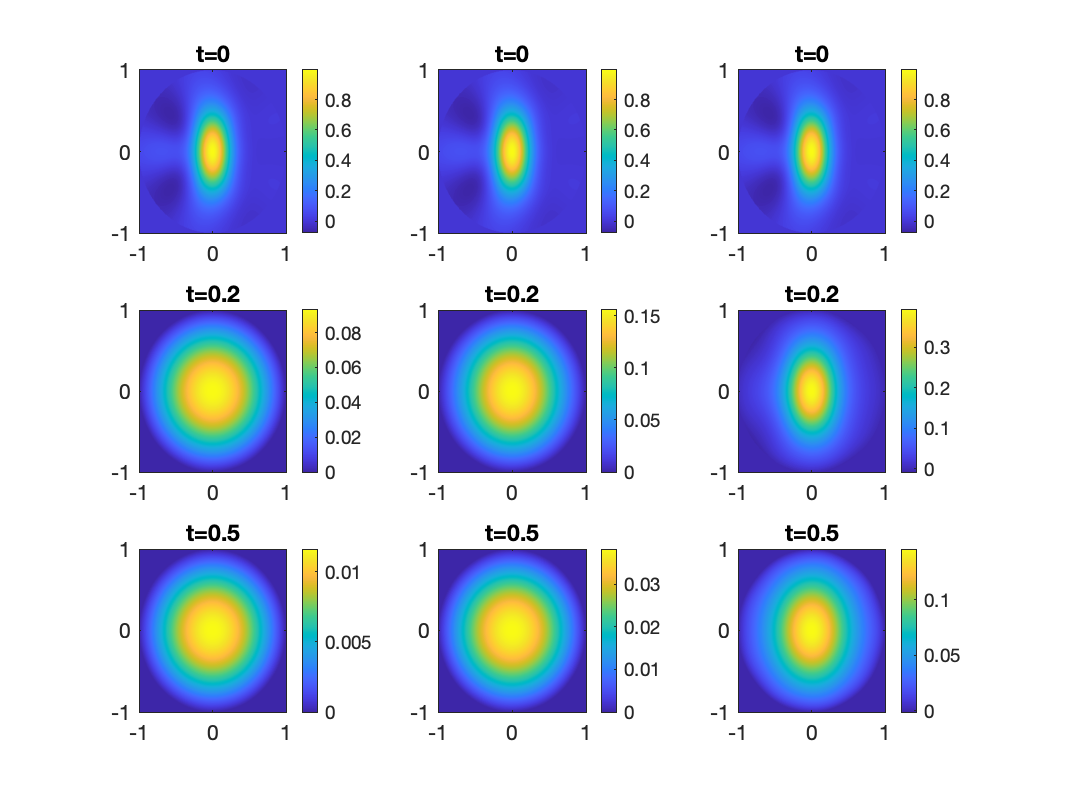}
	\caption{Comparison of the dynamics between the  standard diffusion ($\chi=0$, Left), the mixed diffusion ($\chi=0.5$, Middle) and fractional diffusion  ($\chi=1$, Right) for the model problem \eqref{mixed-diffusion}
		with $\alpha=1$. }\label{comparison-mixed-diffusion}
\end{figure}

In the next example, we consider the application to the quasi-geostrophic
flow,
\begin{eqnarray}
	&& \partial_t \theta + \mathbf{u}\cdot \nabla \theta + \kappa (-\Delta)^{\alpha/2} \theta =0, \notag\\
	&& \mathbf{u}=\nabla^{\perp} \Psi, \notag\\
	&& (-\Delta)^{1/2} \Psi = -\theta,  \label{eq-quasi-geotrophic}
\end{eqnarray}
where $\nabla^{\perp} \Psi= ( -\partial _{x_2} \Psi, \partial_{x_1} \Psi) $.  
In numerical simulations, we consider a single vortex, resembling cyclonic circulations within the atmosphere. We use the explicit strong stability preserving Runge–Kutta method with 3 stages and of order 3 (the Shu-Osher scheme) with time stepsize $dt=0.01$. {\color{black}In space discretization, we take the shape parameter $\varepsilon=0.1$ in the computation  and uniform grid points with the stepsize $h=1/16$ inside the unit disk. } 
We consider the initial condition $\theta(x,0)= e^{-4x_1^2-64x_2^2}$ and present the time evolution of the field $\theta$ is represented in {\color{black}Figure  \ref{isotrophic-flow}}, at different time instants.
We observe that the small eccentricity imposed by the initial condition results in the lack of filaments or secondary vortices and thus the single vortex tends to collapse into a circular configuration.

\begin{figure}[!ht]
	\centering
	\includegraphics[width=1\linewidth]{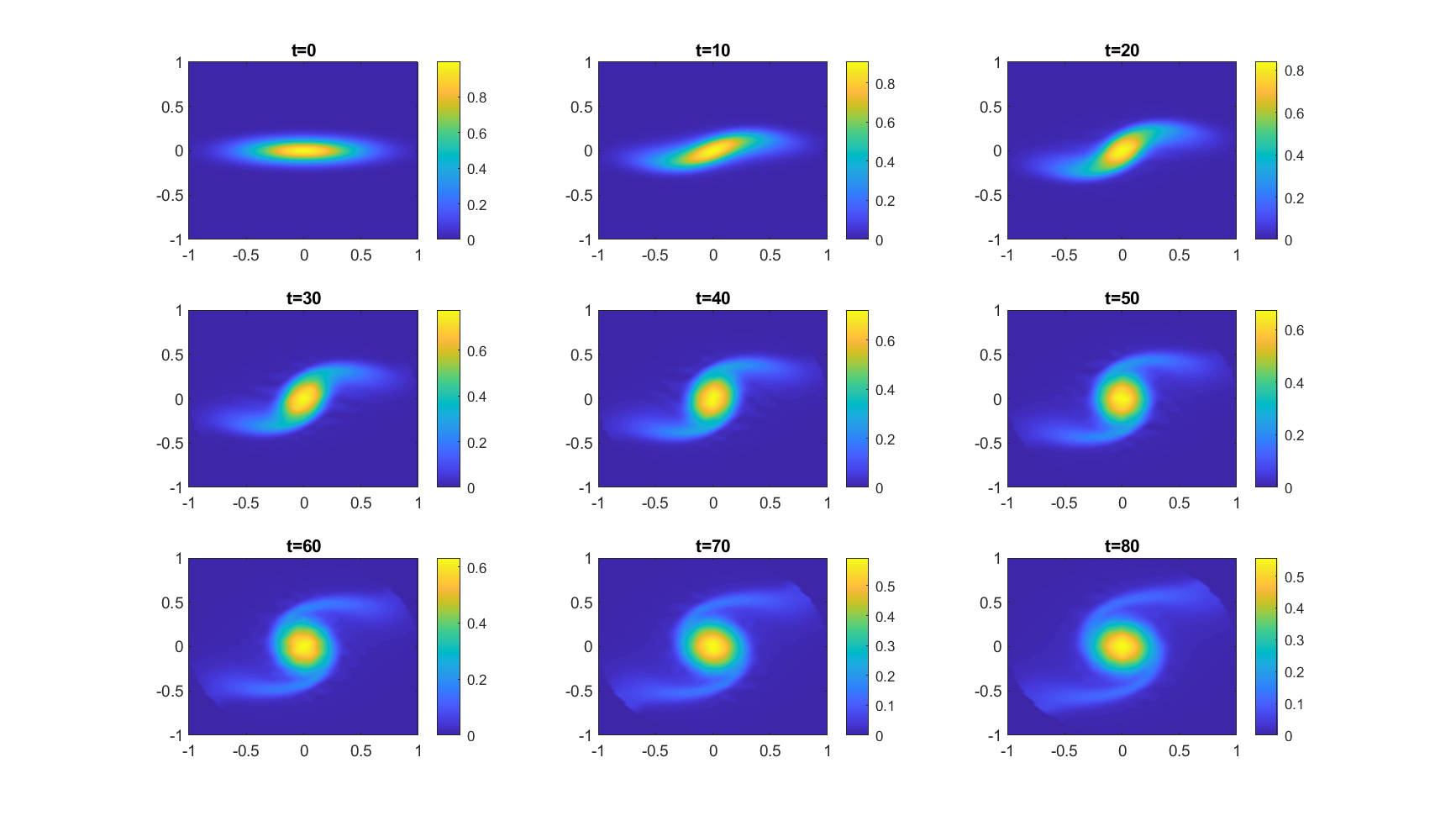}
	\caption{The time evolution of the field $\theta$    for the model problem \eqref{eq-quasi-geotrophic} with $\kappa=0.001$. }\label{isotrophic-flow}
\end{figure}

\section{Summary and Discussions}
We present a new meshless method based on the generalized multi-quadratic RBFs for the integral fractional Laplacian. 
With the power of the generalized multi-quadratic RBF being the order of   fractional Laplacian minus half of the dimension, we discover that the integral fractional Laplacian of it is still a generalized multi-quadratic RBF.
We then utilize this discovery and design a collocation method for equations with fractional Laplacian. 
By establishing the equivalence between the collocation method and a Galerkin method, we obtain the convergence of the collocation method. 
We also provide a simple implementation. 
%
Finally, we demonstrate with numerical examples the accuracy and efficiency of our method compared to the existing method using the Gaussian RBF.

Some directions are worth further exploration. First, the solution to fractional boundary value problems may admit the weaker singularity (first- or second-order derivatives do not exist) near the boundary. 
Condition numbers of the linear system can be large and thus the accuracy of the method can degenerate.
As the condition number 
is closely related to the smoothness of the RBFs and their shape parameters,
some techniques in  \cite{Sarra-Kansa-2009} may help to alleviate the ill-conditioning issues.  {\color{black} Based on the numerical experiments, we give a guidance to choose the shape parameter.  In general, if the solution is smooth, we choose a large parameter for out method. If the solution is non-smooth, the parameter is chosen proportional to the step-size to alleviate the ill-condition issue. However, how to pick the best shape parameters is important but nontrivial task. One of the approaches is to consider the adaptive method which is beyond the scope of this work.}
Second, our method can be combined with existing fast solvers in the RBFs literature.  {\color{black}We hope to report these relevant results for the unsolved issues discussed in this work in our future research.}

\section*{Declarations}
Conflict of interest: The authors declare that they have no conflict of interest.

\section*{Acknowledgments}

{\color{black}We appreciate the reviewer for taking the time to carefully review the manuscript and give detailed and constructive comments, which has greatly helped to improve this paper. }

Hao would like to thank  Professor Jianlin Xia for helpful discussions during the revision of this manuscript {\color{black}and the start--up grant from Southeast University in China. }
Zhang was partially supported
by the Air Force Office of Scientific Research under award number FA9550-20-1-0056.

\bibliographystyle{plain}
\bibliography{RBF_lap_ref} 
 \end{document}